\documentclass[a4paper,11pt]{article}
\usepackage{epsfig,amssymb,amsmath}
\usepackage{amsmath}
\usepackage{amssymb}
\usepackage{amsthm}
\usepackage{indentfirst}
\usepackage{xspace}
\usepackage{multirow}
\usepackage{verbatim}
\usepackage{authblk}
\usepackage{tikz-cd}
\usepackage{capt-of}
\usepackage{silence}
\usepackage{hyperref}
\usepackage{MnSymbol}

\usepackage{graphicx} 
\usepackage{epstopdf}
\usepackage{enumerate}
\usepackage{url}
\usepackage{bm}
\usepackage{epsfig}
\usepackage{color}
\usepackage[labelfont={rm}]{subcaption}

\usepackage{enumerate}
\usepackage[missing=]{gitinfo2}
\usepackage[mathscr]{euscript}

\numberwithin{equation}{section}

\usepackage[margin=1.1in]{geometry}

\newtheorem{theorem}{Theorem}[section]
\newtheorem{lemma}[theorem]{Lemma}
\newtheorem{proposition}[theorem]{Proposition}
\newtheorem{corollary}[theorem]{Corollary}

\newtheorem{question}[theorem]{Question}
\newtheorem{definition}[theorem]{Definition}

\newtheorem{theorem*}{Theorem}
\newtheorem{lemma*}{Lemma}
\newtheorem{proposition*}
{Proposition}
\newtheorem{corollary*}{Corollary}
\newtheorem{conjecture*}{Conjecture}

\newenvironment{remark}[1][Remark.]{\begin{trivlist}
\item[\hskip \labelsep {\bfseries #1}]}{\end{trivlist}}

\title{\textbf{Spectral networks, finite webs, and the $T_3$~cubic~differential}}

\author{Omar Kidwai\thanks{kidwai@math.cuhk.edu.hk}}
\affil{Department of Mathematics, The Chinese University of Hong Kong \newline Shatin, N.T., Hong Kong S.A.R., China}
\author{Guillaume Tahar\thanks{guillaume.tahar@gtiit.edu.cn}}
\affil{Department of Mathematics with Computer Science,\newline Guangdong Technion - Israel Institute of Technology (GTIIT)\newline
Shantou, Guangdong Province, China}
\date{}

\begin{document}

\maketitle

\vspace{-8mm}

\begin{abstract}
 We describe spectral networks for cubic differentials on $\mathbf{P}^1$ with exactly three poles all of order two, following observations of Hollands-Neitzke. To do so, we give a precise definition and establish some basic properties of a class of finite webs (corresponding to BPS states in physics) in the cubic differential setting, including their non-vanishing central charge. Turning to the example, we prove that at rational angles the network is totally degenerate; the number of walls and Stokes regions is explicitly computed and depends on an arithmetic criterion. At all other angles, the spectral network is dense in the surface and contains no finite webs, as expected. Physically, this example corresponds to a fundamental building block for a large class of four-dimensional $\mathcal{N}=2$ QFTs with gauge group $\mathrm{SU}(3)$.
\end{abstract}

\tableofcontents

\section{Introduction}

Spectral networks are graph-like objects on surfaces appearing in a a wide variety of applications. They are fundamental objects in asymptotic/WKB analysis \cite{Berk1982NewSL,aoki1991new,hondabook,hirose2025virtual,aoki1998exact, hollands2020exact}, quantum field theory \cite{gaiotto2010four,GAIOTTO2013239,Gaiotto:2012rg, Grassi:2021wpw}, and Donaldson-Thomas theory \cite{bridgeland2015quadratic,haiden,kidwai2024donaldson}, and are further connected to topics such as cluster algebras, \cite{iwaki2014exact, Neitzke17}, harmonic maps \cite{Neitzke17,dumasneitzke}, Higgs bundles \cite{GAIOTTO2013239, Neitzke:2014cja}, topological recursion \cite{IWAKI2022108191,iwaki2023topological}, ODE-IM correspondence \cite{Gaiotto:2014bza,dumasneitzke,ito2025odeim}, conformal field theory \cite{hao2025new,hollands2026liouville}, higher Teichm\"uller theory \cite{fock2006moduli,Gaiotto:2012db,kineider2024spectral,labourie2007flat}, and others.

Although a considerable amount is known about the simplest kinds of spectral networks, corresponding to those generated by trajectories of quadratic differentials, very little is known for the ``higher rank" case (but see \cite{casals2025spectral,kuwagaki2024generic,morrissey} for recent progress), especially explicitly. Some steps were recently taken by the authors in this direction in \cite{kidwai2025spectral}, including a convenient definition and some general properties for spectral networks coming from cubic differentials $\varphi$ on Riemann surfaces $X$ and the treatment of the first explicit examples, but much remains to be worked out. The fundamental difficulty in the higher rank case is that the network $\mathcal{W}$ is built up iteratively out of successive steps $\mathcal{W}^{(k)}$, $k\geq0$, with the novel phenomenon of intersection of trajectories dramatically increasing the complexity compared to the quadratic differential case.

The motivation of this paper is to work out completely the geometry of a fundamental example in this setting, whose corresponding BPS indices (fundamental invariants describing particles of the supersymmetric QFT, expected to coincide with Donaldson-Thomas invariants in algebraic geometry) were worked out to some order by Hollands and Neitzke \cite{Hollands:2016kgm}. Following their terminology, we refer to it as the \emph{$T_3$ cubic differential}. In that work, the analysis relied on spectral networks degenerating under specific conditions and having a certain structure, which we prove. In order to do so, we also provide some general definitions and results regarding \emph{finite webs} (corresponding to BPS states in physics or stable objects in algebraic geometry) in spectral networks coming from cubic differentials, which are expected to be of wider use in applications.

\paragraph{Context.}

In WKB analysis \cite{exactWKB,iwaki2014exact}, the spectral network is, roughly speaking, the locus where formal $\hbar$-series solutions of a certain Schr\"odinger-like equations cannot be ``resummed". On the complement $X\setminus\mathcal{W}(
\varphi)$, (the connected components of this complement are sometimes called \emph{Stokes regions}) it is expected that these solutions can be turned into genuine, analytic objects with appropriate asymptotic behaviour (i.e. are ``Borel summable"). In the context of the $T_3$ differential, Hollands-Neitzke performed detailed numerical studies of the WKB analysis of the corresponding ODE (``$T_3$ equation") \cite{hollands2020exact}. What we will prove therefore forms an essential step towards proving their conjectures in that work as well.

From another perspective, the work of Bridgeland-Smith \cite{bridgeland2015quadratic} read in conjunction with Gaiotto-Moore-Neitzke \cite{Gaiotto:2012rg} and Kontsevich-Soibelman \cite{kontsevich2008stability} suggests that the finite webs in this paper should correspond to stable objects in some 3-Calabi-Yau category. Following the physics intuition, the example we have treated here would also be an important building block for the understanding of such a correspondence. The analysis of the $T_3$ differential is thus a useful explicit step towards such a higher rank Bridgeland-Smith theory.

Spectral networks appear in quantum field theory as follows. They were originally introduced\footnote{Following their original observation in asymptotics much earlier in \cite{Berk1982NewSL}, see also \cite{hondabook}.} as fundamental objects in an algorithm for the determination of the so-called BPS spectrum (the list of stable supersymmetric ``BPS states") of a large class of four-dimensional $\mathcal{N}=2$ theories, known as \emph{class $\mathcal{S}$ theories} \cite{Gaiotto:2009we}, which take as initial data a Riemann surface $X$ and a Lie algebra $\mathfrak{g}$ (say, $sl_N$ --- $N=2$ then corresponds to quadratic differentials). The BPS states then arise as a parameter is varied as finite webs, which are a kind of degenerate portion of the network. Although this algorithm makes it straightforward (at least in principle) to obtain results, most studies depend on numerical techniques to guess the spectral networks corresponding to a given differential. 

\paragraph {The present work.} In this paper we study one of the two fundamental building blocks \cite{Gaiotto:2009we} corresponding to the case $N=3$, called the $T_3$ theory\footnote{Alternatively, and more historically, identified with the \emph{$E_6$ Minahan-Nemeschansky theory} \cite{MinahanNemeschansky1996E6}}. We prove that spectral networks degenerate precisely when expected by the analysis of Hollands-Neitzke.

More precisely, we determine the spectral networks $\mathcal{W}_\vartheta(\varphi)$  (here $\vartheta$ is a real parameter) for the $T_3$ differential on $X=\mathbf{P}^1$ given in a coordinate $x$ by

\[\varphi=\frac{\alpha dx^3}{x^2(x-1)^2}.\]

The main question one needs to answer to apply the result of Hollands-Neitzke is

\begin{question}
    When do the networks $\mathcal{W}_\vartheta(\varphi)$ degenerate (support so-called ``finite webs"), and what do they look like?
\end{question}

This question might be very hard to answer in general, and even more so when the cubic differential contains no poles of order greater than or equal to three; indeed, in this case, generically one expects trajectories to be dense in the surface. Despite this, due to the simple (the cover is a flat torus) and symmetric (defined by the Eisenstein lattice) geometry of this particular differential, we can answer the question exactly.

In order to prove this, we first give a careful definition of finite webs\footnote{Strictly speaking, we only treat finite webs \emph{supported on the spectral network}, which cannot veer off along other foliation trajectories, as in the closed loop or ``bubble" examples in \cite{Gaiotto:2012rg}.} and their canonical lifts in the general setting of cubic differentials. We prove the important fact that double walls (overlapping, oppositely oriented trajectories in the network) always come from such objects, and that their canonical lifts always have nonzero central charges (periods) of the relevant phase.

\begin{lemma}\label{lemintro:nonzero}
    Suppose $\varphi$ is a meromorphic cubic differential with at most simple zeroes.
    \begin{itemize}\item Every double wall in $\mathcal{W}_\vartheta(\varphi)$ is contained in the image of some finite web supported on the network.
    \item The central charge of any canonical lift of a finite web supported on $\mathcal{W}_\vartheta$ is nonzero of phase $\vartheta+\frac{\pi}{2}$ (up to rotations by $\frac{2\pi}{6}$).
    \end{itemize}
\end{lemma}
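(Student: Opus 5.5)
Both claims will be handled by following walls backwards through their ancestry in the iterative construction $\mathcal{W}^{(0)}\subset\mathcal{W}^{(1)}\subset\cdots$. Each wall of $\mathcal{W}^{(k)}$ is a trajectory of the foliation of phase $\vartheta$ for one of the one-forms $\lambda_i-\lambda_j$ on the spectral cover. Along the wall these forms satisfy $\mathrm{Im}(e^{-i\vartheta}(\lambda_i-\lambda_j))=0$ together with a positivity condition. Under the paper's normalisation of trajectories, $(\lambda_i-\lambda_j)(\dot\gamma)\in e^{i(\vartheta+\pi/2)}\mathbb{R}_{>0}$; I take this from the earlier definitions. The sign label $ij$ of a wall is determined up to the cyclic sixth-root ambiguity coming from the choice of sheets, which accounts for the ``up to rotations by $2\pi/6$''.

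\textbf{Double walls.} A double wall is a segment covered by two oppositely oriented walls, labelled $ij$ and $ji$. I will trace each of the two walls back to its origin. A wall is born either at a simple zero, where three primary walls emanate, or at a joint, where an $ij$ wall meets a $jk$ wall and creates an $ik$ wall. Recording these genealogies gives two finite rooted trees. Their leaves are simple zeros and their internal vertices are joints. Finiteness holds because every wall appears at some finite stage $k$, and each stage has finitely many parents. Gluing the two trees along the common segment yields a graph whose leaves are all branch points. This is exactly a finite web supported on the network in the sense of the paper's definition. The main thing to check is that the gluing is compatible at the ends of the double segment. The segment ends either at joints or at branch points, and oppositeness of orientation means both walls terminate there consistently. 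This is where the assumption of simple zeroes is used: it guarantees the local picture at a branch point is the standard three-pronged one.

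\textbf{Nonzero central charge.} I will define the canonical lift by lifting each edge of the web, labelled $ij$, to the 1-chain $\tilde e=e^{(i)}-e^{(j)}$ on the cover. This forms a closed cycle, because at a joint $ij+jk$ cancels against $ik$ and at a branch point the three lifts close up. The central charge is
\[Z=\oint_{\tilde\gamma}\lambda=\sum_{e}\int_e(\lambda_i-\lambda_j).\]
By the trajectory condition, each summand lies in $e^{i(\vartheta+\pi/2)}\mathbb{R}_{>0}$ once edges are oriented away from the leaves (branch points), which is the orientation in which walls are actually generated. In that orientation every summand has the same phase. Each edge also has positive length, so the sum is nonzero and has phase $\vartheta+\pi/2$.

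The delicate step is this orientation consistency, especially along the double segment, where the two halves of the web meet with opposite orientations. There I will split the double segment at an interior point and orient each half towards the split point from its own tree. With this choice every piece again contributes with the same phase.
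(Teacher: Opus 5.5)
Your construction for the first bullet has a genuine gap. A finite web must send \emph{every} edge to a double wall, but you only know this for the segment you start from. Ancestors of a wall are in general single walls, so gluing the two genealogy trees gives a graph immersed in $\mathcal{W}_\vartheta(\varphi)$, not a finite web. The compatibility at the ends of the double segment, which you single out as the main check, is not where the difficulty lies.

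The missing idea is the paper's Lemma~\ref{lem:doubleparents}: doubleness propagates to parents. Suppose $w$ (label $ik$) is born at a joint $J$ from $a$ (label $ij$) and $b$ (label $jk$), and let $w'$ (label $ki$) be the reversed wall. The maximal double segment runs between the birth points of $w$ and $w'$. Since $w'$ ends only at singularities, it passes through $J$. There it has the same sign as $a$ and $b$ and meets each of them, producing children $ki+ij\to kj$ and $jk+ki\to ji$. These run backwards along $b$ and $a$ respectively, at least to their birth points. So both parents are double on exactly the segments your trees use, and induction on generation makes both trees double. With this step, your argument becomes the paper's: two half-webs built by induction on generation and concatenated at an interior point $p$.

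A smaller correction: generation-$0$ walls emanate from every conical singularity, including simple and double poles. For the $T_3$ differential only double poles occur, each emitting one positive and one negative trajectory. So ``leaves are simple zeros'' with a three-pronged local picture is the wrong description, and that is not where the simple-zero hypothesis enters.

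Your treatment of the second bullet is sound in substance and is a cleaner route than the paper's. The paper computes at a single joint that its joint lift integrates to $(l_a+l_b+l_c)\sqrt{3}\,e^{i\pi/6}$, then propagates this along the web. You instead read off the common phase edge by edge from $(\lambda_i-\lambda_j)(\dot\gamma)\in e^{i(\vartheta+\pi/2)}\mathbb{R}_{>0}$, which is indeed equivalent to the paper's positive/negative slope rule. Two points need tightening:
\begin{itemize}
\item The orientation issue disappears on its own. The chain $e^{(i)}-e^{(j)}$ is unchanged when you reverse $e$ and relabel it $ji$, and every edge of a finite web is double. By contrast, ``orient away from the leaves'' has no global meaning for an arbitrary web, and the statement applies to arbitrary webs.
\item You must identify your lift with the paper's canonical lift (joint, wall and end lifts) up to deck transformations and reversal. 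The paper's explicit joint computation does exactly this, by exhibiting cyclic labels on the three outgoing edges.
\end{itemize}
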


Results along these lines for quadratic differentials were used in e.g. \cite{IWAKI2022108191} to classify spectral networks in certain examples. It is hoped they should be of general use in classifying higher rank spectral networks too.

Together with an explicit analysis of trajectories on the torus, we may then answer Question 1.1 for the $T_3$ differential. Perhaps unsurprisingly, the answer depends on a notion of rationality (this was already noted in \cite{Hollands:2016kgm}). For any $\vartheta\in \mathbb{R}$, we say that $\vartheta$ is \emph{rational} if $\tan\vartheta \in \sqrt{3}\mathbb{Q} \cup\{\infty\}$ and \emph{irrational} otherwise. In the former case, we define $\tan \vartheta = \sqrt{3}\cdot\frac{q}{2p-q}$, with $p,q$ coprime and refer to the quadratic expression \[\Delta_\vartheta=p^2 - p q +q ^2\] as the \emph{Eisenstein determinant}. Our main theorem is then that

\begin{theorem}\label{thm:MAIN}
The spectral network $\mathcal{W}_\vartheta(\varphi)$ satisfies the following properties.
\par
If $\vartheta$ is rational with Eisenstein determinant $\Delta_{\vartheta}$, then $\mathcal{W}_\vartheta(\varphi)$ coincides with $\mathcal{W}^{(0)}_\vartheta(\varphi)$ and every trajectory of $\mathcal{W}_\vartheta(\varphi)$ is a double wall in the image of some finite web. Moreover, $\mathcal{W}_\vartheta(\varphi)$ decomposes $X$ into 
\begin{itemize}
    \item $\frac{2}{3}\Delta_{\vartheta}$ equilateral triangles of the same size if $\vartheta$ is ramified (i.e. $3 \mid \Delta_{\vartheta}$);
    \item $6\Delta_{\vartheta}$ equilateral triangles of the same size if $\vartheta$ is unramified (i.e. $3 \nmid \Delta_{\vartheta}$).
\end{itemize}

If $\vartheta$ is irrational, then $\mathcal{W}_\vartheta(\varphi)$ is dense in $X$ and does not support any double wall (nor finite web).
\end{theorem}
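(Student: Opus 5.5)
The plan is to pass to the canonical cover and reduce everything to straight lines on a flat torus with the Eisenstein lattice. Near each double pole, $\varphi\sim \alpha\,dx^3/x^2$, so the local coordinate $w=\int\varphi^{1/3}$ behaves like $x^{1/3}$. The triple cover $\Sigma\to X$ on which $\varphi^{1/3}$ becomes a single-valued holomorphic $1$-form $\lambda$ is therefore branched exactly over $0,1,\infty$, each with ramification index $3$. Riemann--Hurwitz gives $\chi(\Sigma)=3\cdot 2-3\cdot 2=0$, so $\Sigma$ is a torus. The form $\lambda$ has no zeroes and no poles, and $\Sigma=\mathbf{C}/\Lambda$ in the flat coordinate $z=\int\lambda$.

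The deck group $\mathbb{Z}/3$ acts by $\lambda\mapsto e^{2\pi i/3}\lambda$ with fixed points. This forces $\Lambda$ to be, up to scaling and rotation, the Eisenstein lattice $\mathbb{Z}[e^{2\pi i/3}]$. The three fixed points of the rotation $z\mapsto\omega z$ are $0$ and the two centers of the lattice triangles, and these map to the three poles.

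The $\vartheta$-trajectories of $\varphi$ lift to straight lines in $\Sigma$ of slopes $\vartheta+k\pi/3$, with the sign/orientation conventions of \cite{kidwai2025spectral}. Since $\varphi$ has no zeroes, $\mathcal{W}^{(0)}_\vartheta$ consists only of the walls emanating from the double poles. In the flat picture these are rays leaving the three orbifold points in the six directions $\vartheta+k\pi/3$.

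\textbf{Rational case.} Write the direction $e^{i\vartheta}$ as a multiple of a primitive Eisenstein integer $\gamma=p+q\omega'$, where $\omega'=e^{i\pi/3}$. Then $\tan\vartheta=\sqrt3\,q/(2p+q)$, which matches the paper's normalization up to relabeling. The norm is $N(\gamma)=p^2-pq+q^2=\Delta_\vartheta$ in the paper's form.

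A ray from the origin in direction $\gamma$ closes up after length $|\gamma|$. Before that it may hit another of the three orbifold points, namely a lattice triangle center $(1+\omega')/3$ modulo $\Lambda$. This happens exactly when $\gamma\equiv\pm(1+\omega')$ modulo $(1-\omega)$, i.e. when $\gamma$ is divisible by the ramified prime $\sqrt{-3}$, i.e. when $3\mid\Delta_\vartheta$. In that case the ray is a saddle connection of length $|\gamma|/\sqrt3$ between two poles.

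Oppositely oriented rays from the two endpoints run along the same segment. This produces double walls, so each trajectory is a finite web (a saddle connection, or a closed geodesic through the poles). The walls never cross transversally into new regions producing $\mathcal{W}^{(1)}$ in generic position: the six directions from the poles fill the segments of three families of parallel lines, and intersections occur only at orbifold points or at points where the joining rule of \cite{kidwai2025spectral} produces walls already in $\mathcal{W}^{(0)}$. I expect this verification, that newborn walls at intersections coincide with existing ones so that $\mathcal{W}=\mathcal{W}^{(0)}$, to be the main obstacle. It requires checking the sheet labels at each triple intersection, using the $\mathbb{Z}/3$ symmetry to reduce to one local configuration.

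Granting this, the three families of parallel closed geodesics cut $\Sigma$ into equilateral triangles spanned by the sublattice $\gamma\Lambda$ (or $\gamma\Lambda/\sqrt{-3}$ in the ramified case). We count by area. The torus has area $\mathrm{Area}(\Lambda)$, and triangles of side $|\gamma|$ number $2/\Delta_\vartheta$ times that, inverted. Carefully, the number of triangles on $\Sigma$ equals $2\,[\Lambda:\gamma'\Lambda]$ with appropriate $\gamma'$. Dividing by $3$ for the cover and accounting for the triangle refinement by lines through all three orbifold classes gives $6\Delta_\vartheta$ in the unramified case and $\tfrac23\Delta_\vartheta$ in the ramified case. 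I will fix constants by checking $\vartheta=0$, where $\Delta=1$, and $\vartheta=\pi/6$, where $\Delta=3$.

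\textbf{Irrational case.} Every lifted ray has irrational slope with respect to $\Lambda$, so by Kronecker/Weyl equidistribution it is dense in $\Sigma$, and hence $\mathcal{W}^{(0)}_\vartheta$ is already dense in $X$. For the absence of finite webs, apply Lemma \ref{lemintro:nonzero}. A canonical lift of a finite web has nonzero central charge $Z\in\Lambda\cdot c$ (its period of $\lambda$) of phase $\vartheta+\pi/2$ modulo $\pi/3$. A nonzero lattice vector in such a direction forces $\tan\vartheta\in\sqrt3\mathbb{Q}\cup\{\infty\}$, contradicting irrationality. By the first part of Lemma \ref{lemintro:nonzero}, there are then no double walls either.
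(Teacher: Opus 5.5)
Your setup (Riemann--Hurwitz, the Eisenstein torus, lifting trajectories) and your irrational case are fine, and the irrational case is essentially the paper's argument: density by Kronecker, and no double walls because Lemma~\ref{lem:walltoweb} and Lemma~\ref{lem:nonzero} would produce a nonzero period in $\Lambda$ of irrational phase. The rational case, however, has a genuine gap at exactly the step you flag and then grant. You need that every transverse intersection of two walls of $\widetilde{\mathcal{W}}^{(0)}_\vartheta$ on $\Sigma$ is a triple point, through which double walls in all three directions pass. This is what makes $\mathcal{W}_\vartheta=\mathcal{W}^{(0)}_\vartheta$: whichever of the six directions a child takes, it then runs along an existing double wall. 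It is also what makes the complementary regions equilateral triangles at all, since three families of parallel lines at mutual angle $\frac{\pi}{3}$ that are not concurrent generically cut out hexagons as well as triangles. No sheet labels are involved, because the paper's definition is label-free. The issue is a lattice statement, which the paper proves as Lemma~\ref{lem:TripleIntersection} for walls through a common pole and Lemma~\ref{lem:SecondaryTripleIntersection} for walls through different poles; the latter is indispensable in the unramified case. Here is a short way to get both. Set $f_k(z)=\frac{2}{\sqrt3}\mathrm{Im}(\overline{\omega^k\gamma}\,z)$ for your primitive $\gamma$, and $b=f_0(B)\in\frac13\mathbb{Z}$. Using $\omega B\equiv B \pmod{\Lambda}$, a point lies on the lifted wall of direction $\omega^k\gamma$ through $A$, $B$, $C$ iff $f_k(z)\equiv 0,\ b,\ -b \pmod{\mathbb{Z}}$ respectively. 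Since $f_0+f_1+f_2=0$, any two of these conditions force a third.

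The triangle count is likewise not derived. The lattices you propose, $\gamma\Lambda$ and $\gamma\Lambda/\sqrt{-3}$, are sublattices of $\Lambda$, whereas the vertex set of the triangulation is a superlattice of $\Lambda$:
\begin{itemize}
\item in the ramified case it is $\bar\gamma^{-1}\Lambda$, of index $\Delta_\vartheta$, with triangles of side $1/|\gamma|$;
\item in the unramified case it is $(3\bar\gamma)^{-1}\Lambda$, of index $9\Delta_\vartheta$.
\end{itemize}
This gives $2\Delta_\vartheta$ and $18\Delta_\vartheta$ triangles on $\Sigma$ respectively, and then you divide by $3$. Equivalently, feed $\Delta_\vartheta$ resp.\ $9\Delta_\vartheta$ six-valent vertices into Euler's formula, as in Corollaries~\ref{cor:RamifiedSlope} and~\ref{cor:UNRamifiedSlope}. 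Calibrating constants on $\vartheta=0$ and $\vartheta=\frac{\pi}{6}$ is legitimate only once you have shown that, within each class, the count is a fixed multiple of $\Delta_\vartheta$. That again needs the concurrency and the line spacing above.

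Two minor slips. In the ramified case $A,B,C$ are equally spaced on the closed leaf, because $C=2B$ and $3B\in\Lambda$, so the saddle connections have length $|\gamma|/3$, not $|\gamma|/\sqrt3$. Also, $N(p+q\omega')=p^2+pq+q^2$, not $p^2-pq+q^2$.
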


This confirms and sharpens what was observed by Hollands and Neitzke in \cite{Hollands:2016kgm}, and provides a rigorous geometric foundation for their computation of the BPS indices of the $T_3$ differential (as far as their finite computation went).

\paragraph{AI disclosure.}
AI assistance was used in drawing some figures in the paper, and in pointing out errors and inconsistencies which we fixed manually.

\paragraph{Acknowledgements.}

We thank Andy Neitzke and Lotte Hollands for feedback and for giving us permission to reproduce their plots in Appendix \ref{sec:conventions}.

O.K. is supported by the Improvement on Competitiveness in Hiring New Faculties Funding Scheme of The Chinese University of Hong Kong and the Hong Kong Research Grants Council Early Career Scheme award ``BPS Structures, Topological Recursion, and the Painlev\'{e} Equations" (Project Number 24309425). G.T. is supported by the French National Research Agency under the project TIGerS (ANR-24-CE40-3604).

\section{Spectral networks for cubic differentials}
Although we focus on the $T_3$ differential in the next section, all definitions and results in this section apply to arbitrary meromorphic cubic differentials, at least with the assumption of simple zeroes (this is a natural restriction appearing in most applications we have mentioned). In particular, the label-free definition of finite web, canonical lift, and properties of finite webs and the central charge for the setting of cubic differentials are new, general, and apply beyond just the $T_3$ case.

\subsection{Cubic differentials}\label{sub:differentials}

Given a compact Riemann surface $X$ with canonical bundle denoted by $\omega_X$, a (meromorphic) \emph{cubic differential} $\varphi$ is a meromorphic section of $\omega_X^{\otimes 3
}$. It is easy to see that the notions of \emph{zero}, \emph{pole}, and their \emph{orders} are well-defined (independent of a holomorphic coordinate).


\subsubsection*{Flat structure}

A meromorphic cubic differential on $X$ defines a geometric structure (an atlas of distinguished coordinate charts) in the following way. Denoting by $X^{\ast}$ the surface $X$ punctured at the zeros and poles of $\varphi$, any branch of $\int^{z} \varphi^{1/3}$ (with any fixed lower endpoint in $X^{\ast}$) defines a local biholomorphism from open subsets of $X^{\ast}$ to open subsets of $\mathbb{C}$.

The transition maps of this atlas of local coordinates are of the form
\begin{equation}
    z \mapsto \omega^r z + c
\end{equation} 
where $\omega$ is a primitive third root of unity, $r=0,1,2$, and $c$ is a constant. An atlas of this form is called a \emph{$\frac{1}{3}$-translation structure} (see \cite{BCGGM3,tahar} for references). To avoid such cumbersome terminology, in the rest of the paper, we will designate such a pair $(X,\varphi)$ as a \textit{flat surface}, with the understanding we really mean this particular class of flat surfaces. For sake of simplicity, we will sometimes use the single notation $\varphi$ to denote both the cubic differential $\varphi$ and the underlying complex structure $X$.
\par
In such a surface, the pullback of the standard metric of the Euclidean plane defines a flat metric $|\varphi^{1/3}|$ that is singular at the zeros, simple poles and double poles of $\varphi$ (poles of order at least three correspond to points at infinity for the flat metric, but are not present in this paper anyway). Furthermore, there is a globally defined notion of direction, or \emph{slope} (up to a rotation of angle $\frac{2\pi}{3}$), given by passing to any chart.

The local behaviour of $\varphi$ around any given point $p$ in $X$ is well-known (see e.g. \cite{BCGGM3} or \cite{tahar}). We refer to points which are either zeros or poles of $\varphi$ as \emph{singularities}, and all other points as \emph{regular points}. It is easy to see around any regular point that  $\varphi$ is locally biholomorphic to $(\mathbb{C},dz^3)$. Any singularity of order $k \in \mathbb{N}^{\ast} \cup \lbrace{ -1,-2, \rbrace}$ is called a \textit{conical singularity} of the flat metric. Indeed, some neighborhood of such a point is isometric to a neighborhood of the vertex of an infinite cone of angle $\frac{2\pi}{3}(k+3)$ (see Section 2.3 in \cite{BCGGM3} for details). Since these are the only singularities which appear in this paper, we omit the discussion of higher order poles.

\subsubsection*{Trajectories}\label{sub:Trajectories}

In $(X,\varphi)$, a \textit{trajectory} is an oriented arc parametrized by a real interval that does not contain any singularity (except possibly at the endpoints) and that is locally given by straight segments in the flat charts. We are typically interested in \textit{maximal} trajectories, whose endpoints (in the future) are either zeroes or poles of $\varphi$. We also use the following terminology:
\begin{itemize}
\item A \textit{critical trajectory} is a trajectory (of any slope) starting or ending at one of the conical singularities of $(X,\varphi)$.
\item A \textit{saddle connection} is a trajectory with both ends conical singularities (the same trajectory with an opposite slope is also a saddle connection).
\item A \textit{positive trajectory} (resp. \emph{negative trajectory}) is a trajectory whose oriented slope belongs to $\lbrace{ 0, \frac{2\pi}{3}, \frac{4\pi}{3} \rbrace}$ (resp. $\lbrace{ \frac{\pi}{3}, \pi, \frac{5\pi}{3} \rbrace}$) in any chart of the flat structure.
\end{itemize}

One of the basic behaviours of interest in applications is

\begin{definition}
    A (not-necessarily-maximal) trajectory in a spectral network is called a \emph{\textbf{double trajectory}} or \emph{\textbf{double wall}} if it is also a trajectory in the same spectral network when given the opposite orientation (and thus sign).
\end{definition}

\noindent More informally, a double wall is a pair of overlapping oppositely-oriented trajectories. Although double walls are not maximal, we always assume that they are ``maximally double", i.e. that they cannot be extended while remaining double walls. Unions of double walls (corresponding to so-called \emph{finite webs} in \cite{Gaiotto:2012rg}) appearing in a given spectral network then roughly correspond to BPS states in the physical picture. A saddle connection is an example of a double wall.

We refer to \cite{kidwai2025spectral} for further details on the behaviour of trajectories.
\subsubsection*{Canonical cover}
Given a compact Riemann surface with a cubic differential $(X,\varphi)$ which is assumed not to be globally a cube, there is always another compact connected Riemann surface $\Sigma$ with meromorphic one-form $\lambda$ and a holomorphic map $\pi:\Sigma\rightarrow X$ such that $\lambda^3=\pi^*\varphi$. Such a triple $(\Sigma,\lambda,\pi)$ of degree three is called the \emph{canonical cover} or \emph{spectral cover}. It is easy to check this property determines the triple uniquely up to biholomorphism.

\subsection{Spectral networks}

We recall here the definition of spectral network we will use, the notion of double wall, and simple but useful lemma on double walls.

\subsubsection*{Spectral networks} 
We recall the construction of spectral networks of cubic differentials in a simplified (but essentially equivalent) form to \cite{Gaiotto:2012rg}, introduced in \cite{kidwai2025spectral}.

\begin{definition}
For a meromorphic cubic differential $\varphi$, we define the \textbf{(WKB) spectral network} $\mathcal{W}(\varphi)$ of $(X,\varphi)$ by an iterative procedure:
\begin{itemize}
    \item The \textit{initial network} $\mathcal{W}^{(0)}=\mathcal{W}^{(0)}(\varphi)$ as the set of maximal trajectories formed by the positive and negative trajectories starting from conical singularities of $(X,\varphi)$.
    \item For any step $k$, the arc system $\mathcal{W}^{(k)}(\varphi)$ is formed by positive and negative trajectories (saddle connections account for both). We define $\mathcal{W}^{(k+1)}$ by adding to $\mathcal{W}^{(k)}$ a positive (resp. negative) maximal trajectory (the \emph{child}) starting at each point where two negative (resp. positive) trajectories of $\mathcal{W}^{(k)}$ (the \emph{parents}) intersect, see Figure~\ref{fig:joint}.
    \item The spectral network $\mathcal{W}(\varphi)$ of $\varphi$ is the union $\bigcup\limits_{k \geq 0} \mathcal{W}^{(k)}$, with each trajectory understood to be oriented.
\end{itemize}
\end{definition}

    \begin{figure}[h]
        \centering
        \begin{subfigure}{0.45\textwidth}
                \includegraphics[width=0.95\linewidth]{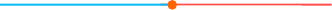}\vspace{2.15cm}
                \caption{Initial step near a double pole (trajectories are oriented outward). }
                \label{fig:zero}
        \end{subfigure}
        \hspace{0.25cm}
        \begin{subfigure}{0.45\textwidth}
        \hspace{0.75cm}\includegraphics[width=0.75\linewidth]{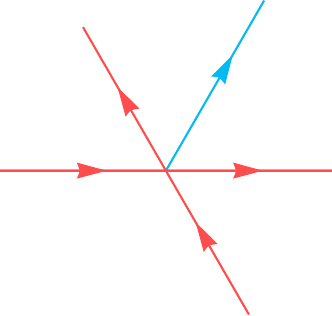}            \caption{Intersection with two positive trajectories giving birth to a negative trajectory.}\label{fig:joint}
        \end{subfigure}

        \caption{Construction of spectral networks from cubic differentials with second order poles.}
        \label{fig:construction}
    \end{figure}

We refer to the intersections between two trajectories of the same sign (i.e. those giving births) as \textbf{joints}.
 
    We use the notation $\mathcal{W}_\vartheta(\varphi):= \mathcal{W}(e^{-3i\vartheta}\varphi)$ to denote the spectral network of the rotated differential, which we call the spectral network of $\varphi$ ``of phase $\vartheta$".

We will later find it useful to keep track of the \emph{generation} of a wall $w$, the smallest $k$ such that $w\in \mathcal{W}^{(k)}$. The parents of a wall, and the parents of the parents, etc. of a wall are referred to as its \emph{ancestors}.

     As part of our strategy to describe the spectral network explicitly, we will refer to $\widetilde{\mathcal{W}_\vartheta}(\varphi)$ the ``spectral network on $\Sigma$"  by the exact same algorithm as above, except that the initial trajectories are taken to be the preimage in $\Sigma$ of the initial trajectories; the mod $\frac{2\pi}{3}$ slope then determines the birthing rule as before, where we consider trajetories on $\Sigma$ to be preimages of trajectories on $X$ (equivalently, geodesics in the flat geometry on $\Sigma$ determined by $\lambda$).

In applications, a fundamental role is played by double walls in the spectral network. The following easy lemma will be important for us:

\begin{lemma}\label{lem:doubleparents}
   The parents of a double wall are both double walls, at least between the birth of the parent and the birth of the child.
\end{lemma}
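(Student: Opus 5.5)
The plan is to argue locally along the double wall, using the joint rule directly. Let $w$ be a double wall in $\mathcal{W}$, say a positive trajectory $w^+$ born at a joint $J$ from two negative parents $p_1,p_2$. Suppose $w$ is also traversed in the opposite orientation by a trajectory $w^-$, which is then negative. Near $J$, the child $w^+$ starts at $J$. The question is whether $w^-$ also passes through $J$ or terminates there.

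First I would show that $w^-$ must reach $J$ and continue beyond it, or else end at $J$. The double wall is taken maximally double, so its extent is controlled by where $w^+$ begins, namely at $J$. Along $w$, $w^-$ travels toward $J$, with slope opposite to that of $w^+$. Since $J$ is a regular point (or at least not a singularity at which trajectories end), $w^-$ continues straight through $J$ in the direction opposite to $w^+$'s initial direction. Hence near $J$ the negative trajectory $w^-$ extends past $J$, and its continuation leaves $J$ in the negative direction opposite to $w^+$.

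Next I would use the geometry of the joint. In a flat chart at $J$, $w^+$ has slope $0$, say. The two negative parents $p_1,p_2$ arrive at $J$ with negative slopes from $\{\pi/3,\pi,5\pi/3\}$, and the child bisects them. By the local picture of Figure~\ref{fig:joint}, the parents come in along directions $\pi/3$ and $-\pi/3$ reversed: they travel with headings $5\pi/3$ and $\pi/3$ backwards, i.e.\ they arrive from the directions at angles $\pm 2\pi/3$ from the child. The key observation is that $w^-$ through $J$ with heading $\pi$ also meets $p_1$ and $p_2$ at $J$. Now $w^-$ together with each parent forms a pair of negative trajectories intersecting at $J$. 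So each such pair is itself a joint, and by the construction it must give birth to positive children. The positive child of $w^-$ and $p_1$ points along the bisecting positive direction, which is exactly the reverse of $p_2$'s incoming direction; symmetrically for $p_2$. Thus reversed copies of $p_2$ and $p_1$ are emitted from $J$ backward along the parents' incoming segments. These reversed children follow the parents' segments back toward where the parents were born, so the parents are double walls at least on the segment from their own birth (or from their source) up to $J$.

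The main obstacle is making the second step rigorous with the correct sign conventions. I would need to check, in the chart, that the orientation conventions of Figure~\ref{fig:joint} make the birth direction from the pair $(w^-,p_i)$ exactly antiparallel to $p_j$. This is a finite angle check modulo $2\pi/3$. I also need to handle the degenerate situation where $w^-$ ends at $J$ rather than passing through it. That cannot happen at a regular point, since maximal trajectories only end at singularities, while joints are intersections at regular points. Finally, the reversed children extend only until they hit the parent's birth point or a singularity, or until geometry differs, which yields the qualifier ``between the birth of the parent and the birth of the child.''
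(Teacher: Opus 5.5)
Your proposal is correct and follows the paper's proof. The oppositely oriented copy $w^-$ of the double wall runs back through the regular point $J$ and meets each parent there, and the child born from $w^-$ and one parent is antiparallel to the other parent (headings $\pi,\pi/3$ give the bisector $2\pi/3$, and $\pi,5\pi/3$ give $4\pi/3$), so it retraces that parent back past its birth. You also make explicit two points the paper leaves implicit: why $w^-$ must reach and pass through $J$, and that only the segment from each parent's birth to $J$ is guaranteed double. A small correction here: the reversed children do not stop at the parent's birth point but continue beyond it as maximal trajectories.
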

\begin{proof}
    If two walls give birth to a wall $w$ which is double, then $w$ with the opposite orientation will necessarily intersect to give a birth opposite and overlapping to each of the two parents, which are therefore double for at least as far back as the parents' births. See Figure \ref{fig:parentsdouble}.

  \begin{figure}[h]
        \centering
 
    \includegraphics[width=0.35\linewidth]{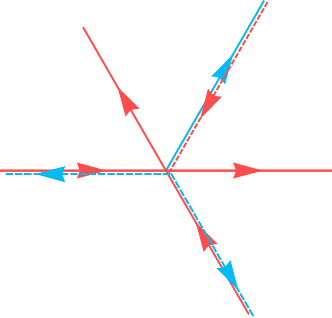}
    \caption{The double wall incoming (dotted red) to a joint produces double walls (dotted blue). Slight displacement for visibility.}
    \label{fig:parentsdouble}
\end{figure}
    
\end{proof}




\subsection{Finite webs, and canonical lifts}

We now describe the mathematical ingredients corresponding to so-called ``BPS states" in physics. We assume all zeroes of $\varphi$ are at most simple zeroes throughout, although this can probably be relaxed.

\subsubsection*{Lattice and central charge}

In applications, an important role is played by the homology lattice
\[
\Gamma = H_1(\widetilde{\Sigma},\mathbb{Z}).
\]

\noindent  where $\lambda$ is the meromorphic one-form on the spectral cover $\Sigma$, and $\widetilde{\Sigma}=\Sigma\setminus \mathrm{Pol}(\lambda)$ with $ \mathrm{Pol}(\lambda)$ the polar locus of $\lambda$. In physics, $\Gamma$ is the \emph{charge lattice} indexing the charges of particles; in Donaldson-Thomas theory, it is expected to be the Grothendieck group of the category in question.

A fundamental role is also played by the \emph{period} or \emph{central charge} homomorphism (viewing $\Gamma$ and $\mathbb{C}$ as abelian groups):

\begin{align*}
    Z:\,&\,\Gamma \longrightarrow \mathbb{C}\\
& \gamma \mapsto \int_{\gamma} \lambda.
\end{align*}

\noindent Note in the case of the $T_3$ differential, $\lambda$ is actually holomorphic, so $\widetilde{\Sigma}=\Sigma$ and $\Gamma=H_1(\Sigma,\mathbb{Z})$.

\subsubsection*{Finite webs}

One of the primary aims in the QFT context is to determine a set of homology classes corresponding to charges of physical particles. In our setting, these are captured by \emph{finite webs} as follows. Note that although we focus on the $T_3$ theory later on, the following definition makes sense for any meromorphic cubic differential.

\begin{definition}
Let $T$ be a (finite, connected) metric ribbon graph\footnote{That is, a graph with a cyclic orientation at each vertex and a length $l\in\mathbb{R}_{+}$ associated to each edge.} with trivalent ``internal" vertices and uni-valent ``external" vertices.
    A \textbf{finite web supported on the spectral network $\mathcal{W}_\vartheta(\varphi)$} is an orientation-preserving isometric immersion $\iota: T\rightarrow X$ such that each edge is sent to a double wall of $\mathcal{W}_\vartheta(\varphi)$, each internal vertex to a joint, and each external vertex to a conical singularity.
\end{definition}

An isometric immersion of a graph with two vertices connected by a single edge of length $l$ along a saddle connection of length $l$ is an example of a finite web. 
\begin{remark}
    Strictly speaking, in the physics context, there are cases where one is interested in finite webs which are \emph{not} supported on the spectral network. We ignore this technicality in this paper and refer loosely to finite webs supported on $\mathcal{W}_\vartheta$ simply as finite webs.
\end{remark}

\begin{remark}
    We also recall that there is a slight deficiency in our definition of spectral network \cite{kidwai2025spectral}: we do not incorporate so-called ``soliton data". This mean that we may have accidentally included some extra walls, so that we might in principle overcount the finite webs. In this paper we concern ourselves with showing where finite webs \emph{could} show up, keeping in mind that some may need to be discarded in applications.
\end{remark}

\begin{lemma}\label{lem:walltoweb}
    Every double wall in $\mathcal{W}_\vartheta(\varphi)$ lies in the image of some finite web supported on $\mathcal{W}_{\vartheta}(\varphi)$.
\end{lemma}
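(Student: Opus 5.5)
Starting from a double wall $w$, I will build the finite web by following $w$ backward and forward in both orientations, using Lemma~\ref{lem:doubleparents} to propagate doubleness to the parents. Since $w$ is double, it carries two oppositely oriented trajectories of $\mathcal{W}_\vartheta(\varphi)$, and each of these has a \emph{source}: the point where it was born. That point is either a conical singularity (the trajectory lies in $\mathcal{W}^{(0)}$) or a joint (the trajectory is a child of two parents of the same sign). By maximal doubleness, the two endpoints of the double segment $w$ should be exactly the sources of the two oppositely oriented trajectories. If one of them extended further, the other would have to continue as well, contradicting maximality, or else it would be ending at a singularity. So each endpoint of $w$ is either a conical singularity or a joint at which one orientation of $w$ is born.

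The construction is then a recursion on generation. Declare the endpoints of $w$ that are singularities to be external vertices. At an endpoint that is a joint, Lemma~\ref{lem:doubleparents} says that both parents are double walls from their own births up to the joint. Attach these two parent double walls as new edges, making the joint a trivalent internal vertex; the two parents together with the child give the three germs. Then repeat the procedure for each parent at its other endpoint.

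To see that the process terminates, I will use generation. A parent has strictly smaller generation than its child, and one orientation of each new edge comes from $\mathcal{W}^{(k-1)}$ when the child was born at step $k$. So along each branch the generation of the born orientation strictly decreases. The recursion must therefore stop after finitely many steps at generation-$0$ walls whose source is a conical singularity. Since each step adds at most two edges, the abstract graph $T$ that we build is a finite tree. Its edges carry the lengths of the double segments, and the cyclic order at each vertex is induced from $X$. By construction, $\iota$ is a locally isometric, orientation-preserving immersion that sends edges to double walls, internal vertices to joints, and leaves to singularities.

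The main obstacle is the endpoint analysis, namely the claim that a maximal double wall ends precisely at the births of its two orientations, and that at a joint the three incident germs really are three distinct edges. There are two difficulties.
\begin{itemize}
\item The double segment could in principle stop in the middle of a trajectory. For example, one orientation might be born at a joint lying in the interior of the other orientation's trajectory. I will argue that this point is then precisely a joint, which is where Lemma~\ref{lem:doubleparents} applies, so the case is handled anyway.
\item Several births might coincide at one point. I will handle this by treating each orientation's birth separately, which may produce a vertex of higher multiplicity; I expect this is excluded generically, or it can be resolved by splitting into trivalent vertices of zero length. The simple-zero assumption should ensure that trajectories from singularities behave locally as in Figure~\ref{fig:zero}.
\end{itemize}
If the recursion revisits an edge, I unfold it in $T$ rather than identify it; this is allowed because $\iota$ only needs to be an immersion.
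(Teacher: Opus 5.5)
Your proposal is correct and is essentially the paper's argument. Both proceed by induction on generation: Lemma~\ref{lem:doubleparents} makes both parents at each joint double, the parents are attached at a trivalent internal vertex, and the recursion stops at generation-$0$ walls leaving conical singularities. The one difference is that the paper starts from an arbitrary interior point $p$ of the double wall and traces each orientation back to its own birth (two ``half-webs'' glued at $p$), which avoids the endpoint analysis you single out as the main obstacle; coincident births and repeated edges are absorbed, as you note, by $\iota$ being only an immersion.
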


\begin{proof}
    We pick one of the two trajectories in the double wall and construct ``half" a finite web with one external vertex temporarily mapped to an arbitrary point $p$ along the interior of the double wall instead of a conical singularity (call such objects half-webs). 
    
    We now produce this half-web by induction on the generation $k$ of the wall in question.   
    
    If the wall is of generation $0$, i.e. it is a critical trajectory, we take $T$ to consist of two (external) vertices with an edge connecting them -- one mapped to the conical singularity, and one to $p$, with the edge mapped to the wall up to $p$, giving the requisite half-web.

    Suppose now that we have obtained a half-web from all double walls of generation at most $k$. Given a wall of generation $k+1$, we trace it backwards to the joint it was born from; by Lemma \ref{lem:doubleparents} both parents are double walls. Since the parents are at most generation $k$ by the inductive hypothesis we obtain two half-webs; gluing these two together at the vertex, together with the single edge from $p$ back to the vertex, we obtain a half-web.

Since the original wall was is double, we repeat the construction along the other trajectory in the opposite direction; then concatenating the two half-webs we have obtained at $p$, we obtain a genuine finite web.
    
\end{proof}

\subsubsection*{Canonical lifts}

Following Gaiotto-Moore-Neitzke \cite{Gaiotto:2012rg}, from a finite web we construct homology classes. As with our definition of spectral network, the construction is simplified thanks to the setting of cubic differentials, and not restricted any particular example.

\begin{definition}
   The \emph{\textbf{canonical lifts}} of a finite web supported on $\mathcal{W}_\vartheta(\varphi)$ are given by the following local lifting rule. 
   
   \begin{itemize}
        \item For every joint in the web, lift a portion of the trajectories along each ``bend" in the joint to successive sheets following the cyclic order as in Figure \ref{fig:jointlift}. Extend them all the way to the end of each incident double wall. Call each of these three 1-chains (six including orientation) a joint lift.

    \begin{figure}[h]
        \centering
     \includegraphics[width=0.85
    \linewidth]{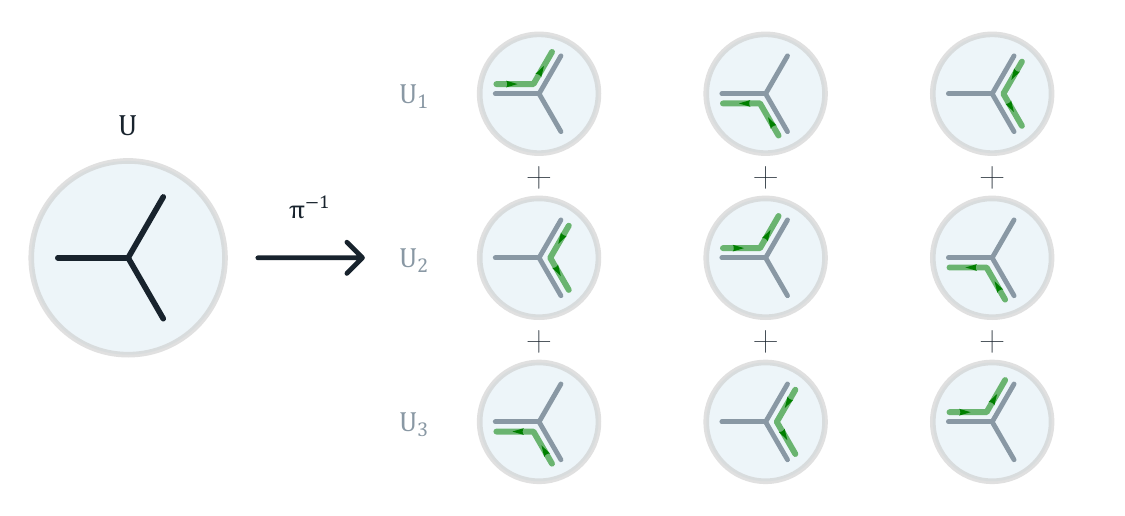}

        \caption{Joint lift (only one orientation is shown for each chain).}
        \label{fig:jointlift}
    \end{figure}

       \item For every double wall, lift the interior of its constituent trajectories to each of the three sheets. Call these wall lifts.

       \item For double walls with a conical singularity as an endpoint, we lift the wall to some sheet, follow the preimage of the wall back to the conical singularity,  traverse counterclockwise around it until the projection makes a full circle, and move back along the the preimage of the interior of the wall. Call these end lifts.

    \begin{figure}[h]
        \centering
        \begin{subfigure}{0.45\textwidth}
                \includegraphics[width=1.15\linewidth]{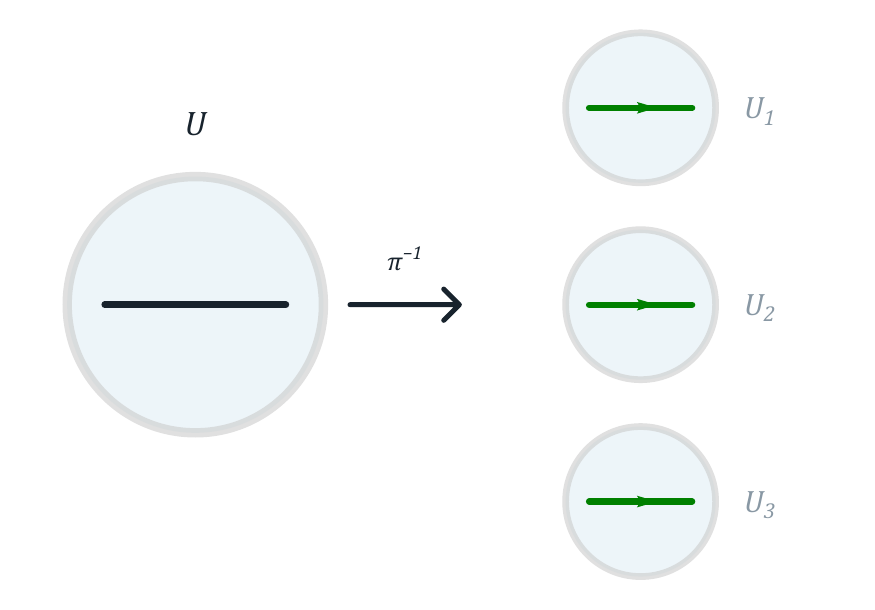}
                \caption{Wall lifts (only one orientation is shown).}
                \label{fig:walllift}
        \end{subfigure}
        \hspace{0.5cm}
        \begin{subfigure}{0.45\textwidth}
        \includegraphics[width=1.15\linewidth]{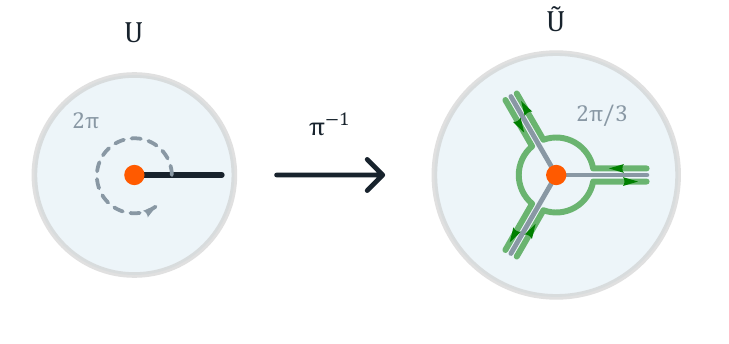}            \caption{End lifts around a ramification point of order three (only one orientation is shown).}\label{fig:endlift}
        \end{subfigure}

        \caption{Canonical lifting walls and near conical singularities.}
        \label{fig:wallendlift}
    \end{figure}

    \item Glue any wall, joint, and end lifts together, consistent with orientation, wherever they overlap.
   \end{itemize}

   By construction, we obtain at least one homology class in $\Gamma=H_1(\widetilde{\Sigma},\mathbb{Z})$ (in fact six, related by orientation reversal and the cyclic automorphism of $\Sigma$ permuting sheets).

\end{definition}

   

       

    



   Note that even if a vertex is not trivalent, we only consider trivalent embeddings --- for example in Figure \ref{fig:finiteweblift}, at the left vertex, there happen to be three trajectories which are not double (if they had been, the rule does not change, but we would have more embeddings).

    \begin{figure}[h]
        \centering
             \begin{subfigure}{0.37\textwidth}
        \includegraphics[width=\linewidth]{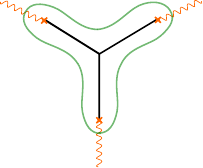}       
        \label{fig:canonicaltripod}
        \caption{Lifting a critical tripod}
        \end{subfigure}
                \hspace{1.5cm}
        \begin{subfigure}{0.33\textwidth}
                \includegraphics[width=\linewidth]{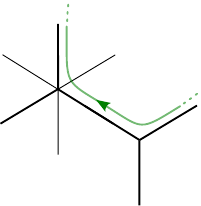}
                \vspace{0.5cm}
                \caption{A finite web might pass through a higher-valency vertex.}
                \label{fig:finiteweblift}
        \end{subfigure}

        \caption{Canonical lifting}
        \label{fig:canonicallifts}
    \end{figure}

A crucial point, itself of independent interest, is the following.

\begin{lemma}\label{lem:nonzero}
    Suppose $\varphi$ is a meromorphic cubic differential with at most simple zeroes. The central charge $Z(\gamma)$ of any canonical lift $\gamma\in\Gamma$ of a finite web supported on $\mathcal{W}_\vartheta(\varphi)$ is nonzero of phase $\vartheta+\frac{\pi}{2}$ (up to rotation by $\frac{2\pi}{6}$).
\end{lemma}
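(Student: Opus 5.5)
The plan is to compute $Z(\gamma)$ locally, edge by edge of the finite web. The goal is to show that every local piece of a canonical lift contributes a complex number lying on one common ray $e^{i(\vartheta+\pi/2)}\mathbb{R}_{>0}$, up to a single global rotation by a power of $e^{2\pi i/6}$. The total is then a sum of nonzero numbers on a single open ray, so it is nonzero. Replacing $\varphi$ by $e^{-3i\vartheta}\varphi$ rescales $\lambda$ by $e^{-i\vartheta}$, so we may take $\vartheta=0$ throughout and check that the phase is $\pi/2$ up to multiples of $\pi/3$.

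\textbf{Step 1 (local labels).} Near a regular point choose a flat chart $z$. Locally the three sheets of $\Sigma$ carry $\lambda_k=\omega^k\,dz$ for $k=0,1,2$. Let $v$ be the unit tangent of a positive or negative trajectory, so $dz(v)\in\{\pm1,\pm\omega,\pm\omega^2\}$. The six ordered differences $\lambda_a(v)-\lambda_b(v)$ with $a\neq b$ are $\pm\omega^m(1-\omega)$. Since $1-\omega=\sqrt3\,e^{-i\pi/6}$, these six numbers point along the six distinct rays $e^{i(\pi/2+j\pi/3)}$, $j=0,\dots,5$. I would therefore attach to each oriented trajectory the unique ordered pair of sheets $(a,b)$ with $\lambda_a(v)-\lambda_b(v)\in i\,\mathbb{R}_{>0}$.

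Reversing the orientation of $v$ swaps the pair to $(b,a)$. On a double wall, the two overlapping trajectories therefore carry opposite orientations and swapped labels. Hence the integral of $\lambda_a-\lambda_b$ along the first trajectory and of $\lambda_b-\lambda_a$ along the second both equal $i\sqrt3\,\ell$, where $\ell$ is the length. Each wall lift then contributes a positive multiple of $i$ times its length.

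\textbf{Step 2 (consistency at joints and ends).} I would check that the gluing rules in the definition of canonical lift are precisely those that make the chain along every edge of the web equal to $(\text{sheet } a\text{ forward})-(\text{sheet } b\text{ forward})$, with $(a,b)$ the label from Step 1 and a single global convention.

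At a joint, two same-sign parents with labels $(a,b)$ and $(b,c)$ give birth to a child labelled $(a,c)$. This can be checked directly in the chart by comparing slopes, and the cyclic-order rule of Figure \ref{fig:jointlift} connects exactly these sheets. The bends of the joint lift are small arcs, and the integrals of $\lambda$ over them cancel pairwise or tend to zero as the bend shrinks.

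At a conical singularity the end lift goes out along sheet $a$, loops once around the point, and returns on the next sheet. Since the zeros are at most simple, $\lambda$ is locally $\sim w^{k/3}\,dw$ with $k\ge -2$, which is integrable. The small loop therefore contributes nothing in the limit. The return sheet is the correct $b$ because going once around a simple zero or a double pole permutes the sheets cyclically.

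Summing over edges then gives
\[
Z(\gamma)=u\cdot i\sqrt3\sum_{e}\ell(e),
\]
where $u$ is a sixth root of unity coming from the choice among the six lifts. Since every edge length is positive, $Z(\gamma)\neq 0$ and its phase is $\vartheta+\pi/2$ modulo $2\pi/6$.

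\textbf{Main obstacle.} I expect the difficulty to be Step 2. One must verify globally, and not merely edge by edge, that the counterclockwise convention at ends and the cyclic-order convention at joints induce the same label orientation on both halves of every double wall. If they did not, contributions from different edges could carry opposite signs and cancel. I would first verify this in the basic example of a critical tripod around a simple zero, following Figure \ref{fig:canonicallifts}. I would then propagate the labels inductively along the web, mirroring the generation induction in Lemma \ref{lem:walltoweb}, and use Lemma \ref{lem:doubleparents} to transport the labels backwards through each joint.
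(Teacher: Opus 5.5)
Your proposal is correct and is essentially the paper's argument. The paper computes in a flat chart at a joint that each incident edge of the joint lift contributes $\sqrt{3}\,\ell\,e^{i\pi/6}$, then propagates this common phase through the finite connected web (base case a saddle connection); this is your identity $Z(\gamma)=u\cdot i\sqrt{3}\sum_e \ell(e)$ organized via the intrinsic sheet labels $(a,b)$ with $(\lambda_a-\lambda_b)(v)\in i\mathbb{R}_{>0}$, and your ``main obstacle'' is resolved the same way: labels compose at joints and gluing along each double wall forces a single $u$ by connectedness, while the ends need no orientation bookkeeping because the unique preimage of a conical point lies in $\widetilde{\Sigma}$, so the two lifted sheets of an edge already meet there and close the chain.
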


\begin{proof}
We prove it first locally around a joint; the general case follows. Assume $\vartheta=0$, which we may always do by first rotating $\varphi$.

Take a chart in the flat structure around a joint. Without loss of generality, we centre it at the origin in $\mathbb{C}$, the differential is given by $dz^{\otimes 3}$, and the spectral network in a neighbourhood of the joint looks as depicted in Figure \ref{fig:jointcharge}, with each of the three (double) trajectories a straight Euclidean line; call their lengths $l_a,l_b,l_c$. The central charge is given by integrating the three roots of $dz^{\otimes3}$ along the curves in the joint lift.

  \begin{figure}[h]
        \centering
 
    \includegraphics[width=0.45\linewidth]{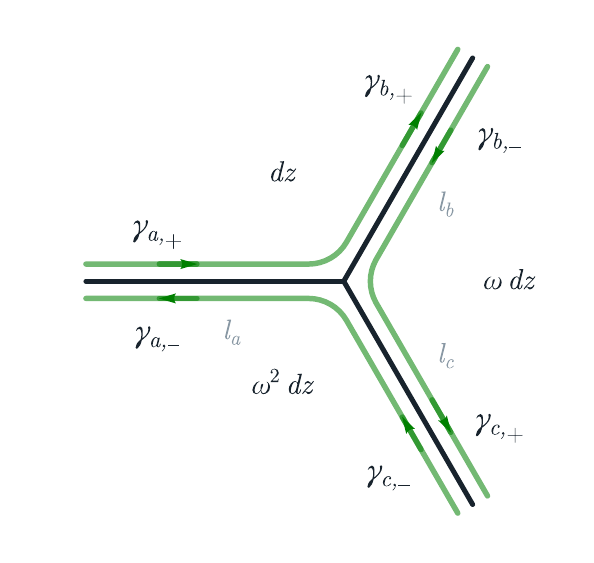}
    \caption{Central charge around a joint.}
    \label{fig:jointcharge}
\end{figure}

For $\gamma_{a,+}$, oriented positively along the real line, we have $\int_{\gamma_{a,+}}\lambda=l_a$. As we move to $\gamma_{b,+}$, we do not change sheets, so the integrand remains the same, but the path is rotated by $e^{\frac{\pi i}{3}}$, giving $\int_{\gamma_{b,+}}\lambda=e^{\frac{\pi i}{3}}l_b$. The contribution from $\gamma_{b,_-}$ has the same inclination but orientation reversed, and the lifted curve is on the cyclically-next sheet, so we must integrate $\omega dz$ i.e. $\int_{\gamma_{b,-}}=-\omega e^{\frac{\pi i}{3}} l_b$. Continuing until all six contributions have been obtained, we find

\[\int_{\gamma}\lambda=l_a(1-\omega^2)+ e^{\frac{\pi i}{3}}l_b(1-\omega)+e^{\frac{5\pi i}{3}}l_c(\omega-\omega^2) = (l_a+l_b+l_c)\sqrt{3} e^{\frac{i \pi}{6}}\]

\noindent which is therefore nonzero and of phase $\frac{\pi}{2}$ up to $\frac{2\pi}{6}$ rotation. A similar argument applies to the other lifts.

The general case follows; by the above, for any wall lift of a double wall $w$, the integral along the (compatibly oriented on the same sheet) wall-lift covering its parent double walls must have the same phase as the integral along $w$ itself; since finite webs are assumed to be connected and have finitely many edges, the resulting integral over the canonical lift of the whole web is a finite sum of pieces all of the same phase and nonzero length. The base case is merely a saddle connection, for which the result is immediate.

\end{proof}

\section{Analysis of the $T_3$ differential}
We now focus our attention to the main subject of the paper. We are interested in the spectral networks of cubic differentials on $\mathbf{P}^1$ with exactly three second order poles. Up to automorphisms of $\mathbf{P}^1$, such a differential can always be written in a coordinate $x$ as:

\[\varphi=\frac{\alpha dx^3}{x^2(x-1)^2}\]
Since $\alpha$ only overall changes the phases and rescales the metric, henceforth, we set $\alpha=1$. Since it is essentially unique, we refer to it (following terminology of \cite{Hollands:2016kgm}) as the \emph{$T_3$ cubic differential}.

Our analysis is possible thanks to the following crucial lemma.

\begin{lemma} The flat surface $\mathbf{P}^1$ endowed with the flat metric $|\varphi^{1/3}|$ is isometric to the conical flat surface obtained by gluing two equilateral triangles as shown. Its canonical triple cover (or spectral cover) $\Sigma$ ramified over the three double poles is the equilateral flat torus $\mathbb{C}/(\mathbb{Z}+\omega\mathbb{Z})$ with $\omega=e^{\frac{2i\pi}{3}}$. The three ramification points are respectively $A=0$, $B=\frac{2+\omega}{3}$ and $C=B+B=\frac{1+2\omega}{3}$.
\end{lemma}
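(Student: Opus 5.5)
Two observations drive the argument: every point of the flat surface has zero curvature except the three poles, and the spectral cover carries a holomorphic one-form, which forces it to be a flat torus.

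We first determine the local geometry at the poles. By the discussion of conical singularities, a pole of order $-2$ is a conical point of angle $\frac{2\pi}{3}(-2+3)=\frac{2\pi}{3}$. There are no zeros, so $X$ is a flat sphere with exactly three cone points, each of angle $\frac{2\pi}{3}$. Gauss--Bonnet is consistent with this: the total curvature is $3\cdot\frac{4\pi}{3}=4\pi=2\pi\chi(\mathbf{P}^1)$.

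Next we build the triangle decomposition by developing the flat structure directly. On $\mathbf{P}^1\setminus\{0,1,\infty\}$, the real line splits into the three real intervals $(-\infty,0)$, $(0,1)$, $(1,\infty)$, which join the poles pairwise. On each interval $\varphi$ is real up to a constant cube root of unity: for instance, on $(0,1)$ one has $\varphi^{1/3}=dx/(x(1-x))^{2/3}$, which is positive real. Each interval is therefore a straight geodesic segment (a saddle connection). Its length is a Beta-type integral. The three lengths coincide because the M\"obius symmetries $x\mapsto 1-x$ and $x\mapsto 1/x$ preserve $\varphi$ up to a cube root of unity and permute the poles. One checks this preservation directly: for example, $x\mapsto 1/x$ sends $dx^3/(x^2(x-1)^2)$ to $-dx^3/(x^2(1-x)^2)$.

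Each half-plane is simply connected and free of singularities, so its developing map is injective onto a convex region bounded by three segments of equal length. The interior angles are where the main obstacle lies. I would argue that the angle at each vertex is half the cone angle, namely $\frac{\pi}{3}$, because complex conjugation $x\mapsto\bar x$ is an anti-holomorphic isometry fixing the real line and swapping the two half-planes. This gives two equilateral triangles glued along their boundary in the pattern shown. For the conjugation step, I would first verify that $\overline{\varphi(\bar x)}=\varphi(x)$.

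For the cover, we use that $\lambda^3=\pi^*\varphi$ with $\pi$ totally ramified of order three over each pole. Locally $\varphi\sim y^{-2}dy^3$, and pulling back by $y=t^3$ gives $\lambda\sim 3\,dt$, so $\lambda$ is holomorphic and nonvanishing. Riemann--Hurwitz gives $2g-2=3(-2)+3\cdot 2=0$, so $\Sigma$ is a torus and $\lambda$ induces a flat metric on it. Since $\Sigma$ is the cover of the two-triangle sphere that is triply branched at the vertices, it is tiled by six equilateral triangles meeting six at each vertex. This identifies $\Sigma$ with the equilateral torus $\mathbb{C}/(\mathbb{Z}+\omega\mathbb{Z})$, after rescaling so that the side length is $1/\sqrt3$.

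Finally we locate the ramification points. The deck transformation of order three, $z\mapsto\omega z$, acts on this torus, and its fixed points are $0$, $\frac{2+\omega}{3}$ and $\frac{1+2\omega}{3}$. For example, $\omega\cdot\frac{2+\omega}{3}=\frac{2\omega+\omega^2}{3}=\frac{\omega-1}{3}$, which differs from $\frac{2+\omega}{3}$ by $-1$, a lattice vector. These three fixed points are exactly the preimages of the poles, and $C=2B$ follows immediately.
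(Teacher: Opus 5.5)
Your identification of $\Sigma$ with the equilateral torus does not go through as written. The fact that $\Sigma$ is a flat torus tiled by six equilateral triangles, six meeting at each of three vertices, does not determine the period lattice. Consider $\mathbb{C}/(\mathbb{Z}+3\omega\mathbb{Z})$ with the tiling induced by the unit triangular tiling of $\mathbb{C}$. It has three vertices, six triangles, and six triangles around each vertex. Yet its only shortest vectors are $\pm1$, so it is not equilateral.

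The missing ingredient must be specific to the canonical cover. The quickest is the deck transformation $\tau$:
\begin{itemize}
\item From $\pi\circ\tau=\pi$ and the local form $\lambda\sim 3\,dt$ at a fixed point, you get $\tau^*\lambda=\omega^{\pm1}\lambda$.
\item So in a developing coordinate centred at a lift of $A$, a lift of $\tau$ is $z\mapsto\omega^{\pm1}z$.
\item Since this lift descends to $\Sigma$, it conjugates the period lattice $\Lambda$ to itself, i.e.\ $\omega\Lambda=\Lambda$.
\item A lattice invariant under rotation by $\frac{2\pi}{3}$ is homothetic to $\mathbb{Z}[\omega]$.
\end{itemize}
Alternatively, each lifted triangle has exactly one vertex over each pole. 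Deck translations therefore preserve the unique proper $3$-colouring of the vertex lattice $V$ of the lifted tiling, which forces $\Lambda=(1-\omega)V$, again hexagonal.

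You do invoke $z\mapsto\omega z$, but only after the identification, to locate the fixed points. Moved before it, it closes the gap, and your fixed-point computation and $C=2B$ then go through unchanged. The paper avoids the issue by arguing in the opposite direction. It starts from the equilateral torus, exhibits an explicit triple cover of the two-triangle sphere branched at $A,B,C$ (rotating the triangles about the ramification points), and identifies this with $\Sigma$ by uniqueness of the canonical cover.

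Apart from this, your argument is sound, and for the base surface it takes a genuinely different route from the paper. The paper argues by uniqueness: the doubled equilateral triangle is a flat sphere with three cone points of angle $\frac{2\pi}{3}$, and up to scale there is only one cubic differential on $\mathbf{P}^1$ with this property. You instead cut along $\mathbb{R}\cup\{\infty\}$ and develop each half-plane. This is exactly the Schwarz--Christoffel map $x\mapsto\int_0^x t^{-2/3}(t-1)^{-2/3}\,dt$, and it has the advantage of making the triangles, their common side length and the gluing pattern explicit.

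Two small repairs are needed there. First, simple connectivity and absence of singularities do not by themselves give an injective developing map. You should establish the corner angles $\frac{\pi}{3}$ first and use local convexity of the boundary, or simply cite Schwarz--Christoffel. Second, $x\mapsto 1-x$ and $x\mapsto 1/x$ send $\varphi$ to $-\varphi$, not to a cube-root-of-unity multiple. This is harmless, since lengths depend only on $|\varphi|$.
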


\begin{proof}
The differential $\varphi$ has precisely three conical singularities, each of angle $\frac{2\pi}{3}$. Up to rescaling, it is the unique cubic differential on $\mathbf{P}^1$ with this property. On the other hand, by gluing two Euclidean triangles $\triangle_+$ and $\triangle_-$ as shown in Figure \ref{fig:base}, we immediately observe that each corner angle is $\frac{2\pi}{3}$. Thus the two surfaces are isometric up to scale.

    \begin{figure}[h]
        \centering
        \begin{subfigure}{0.35\textwidth}
                \includegraphics[width=\linewidth]{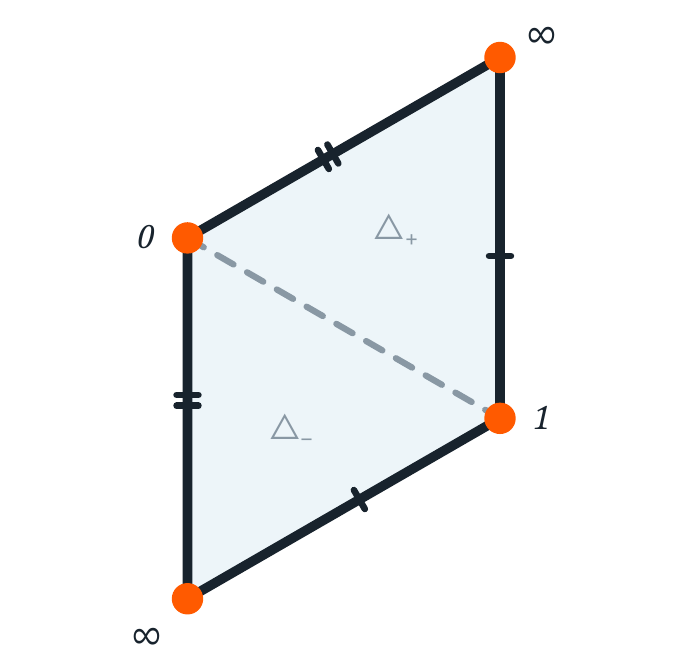}
                \caption{There are three singularities (conical points) on $X=\mathbf{P}^1$}
                \label{fig:base}
        \end{subfigure}
        \hspace{1.5cm}
        \begin{subfigure}{0.45\textwidth}
        \includegraphics[width=\linewidth]{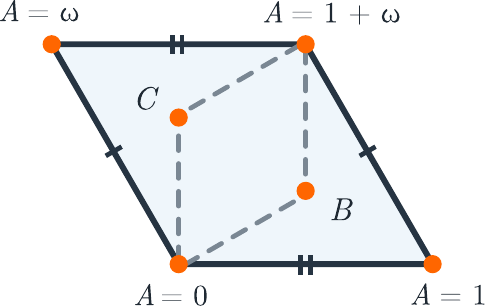}            \caption{The spectral cover $\Sigma$ is the equilateral torus }\label{fig:cover}
        \end{subfigure}

        \caption{The flat surface and its spectral cover.}
        \label{fig:basecover}
    \end{figure}
Taking $\Sigma$ to be an equilateral parallelogram with identified opposite sides, the map rotating the two triangles counterclockwise around the points $\frac{2+\omega}{3}$ and $\frac{1+2\omega}{3}$ gives a triple cover branched exactly at $A$, $B$ and $C$, see Figure~\ref{fig:cover}. Up to scaling and a rotation by $\frac{2\pi}{3}$, we obtain the flat torus  $\mathbb{C}/(\mathbb{Z}+\omega\mathbb{Z})$.
\end{proof}

Since $\Sigma$ is the quotient of $\mathbb{C}$ by the Eisenstein integers $\mathbb{Z}[\omega]$, we fix the $\mathbb{R}$-basis $(1,\omega)$ of $\mathbb{C}$, and denote by $R$ the operator corresponding to a rotation of angle $\frac{2\pi}{3}$. In this basis we have 
\[R(a,b)=(-b,a-b) \quad \quad R^{2}(a,b)=(b-a,-a).\] Note that $R$ preserves the Eisenstein determinant.

    Every trajectory of $\varphi$, in particular in $\mathcal{W}_\vartheta(\varphi)$ is the projection of some trajectory of $\lambda$ on $\Sigma$.

\begin{lemma}\label{lem:LiftProjection}
For any slope $\vartheta$, the preimage $\pi^{-1}({\mathcal{W}_\vartheta})$ coincides with the spectral network $\widetilde{\mathcal{W}}_\vartheta$ on $\Sigma$ with set of conical points $\lbrace{ A,B,C \rbrace}$.
\end{lemma}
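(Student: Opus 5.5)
We argue by induction on the generation, showing that $\pi^{-1}(\mathcal{W}^{(k)}_\vartheta)=\widetilde{\mathcal{W}}^{(k)}_\vartheta$ for every $k\geq 0$, with orientations included. The equality for the unions then follows. Two facts about the covering are used throughout. First, $\pi:\Sigma\to X$ is a local isometry away from $A,B,C$ for the metrics $|\lambda|$ and $|\varphi^{1/3}|$. Second, since $\lambda^3=\pi^*\varphi$, the slope of a trajectory on $\Sigma$ agrees modulo $\frac{2\pi}{3}$ with the slope of its projection. So $\pi$ sends maximal trajectories of $\lambda$ to maximal trajectories of $\varphi$ and preserves the sign (positive or negative). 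Conversely, every maximal trajectory of $\varphi$ lifts: away from the singularities $\pi$ is an unbranched covering, so path lifting gives exactly three lifts, one starting at each preimage of the starting point.

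For the base case $k=0$, take a conical singularity $x$ of $X$ (an image of $A$, $B$ or $C$). It has angle $\frac{2\pi}{3}$, so it emits exactly one positive and one negative trajectory of each oriented slope class. Its preimage is a single ramification point of order three, which is a regular point of the torus with angle $2\pi$. Going around it once upstairs covers the small circle around $x$ three times. Hence the three positive (resp. negative) rays leaving the ramification point upstairs are exactly the preimages of the initial positive (resp. negative) trajectory at $x$. Extending these to maximal trajectories, and using path lifting as above, gives $\pi^{-1}(\mathcal{W}^{(0)}_\vartheta)=\widetilde{\mathcal{W}}^{(0)}_\vartheta$. This matches the definition of $\widetilde{\mathcal{W}}_\vartheta$, whose initial trajectories were taken to be preimages of initial trajectories.

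For the inductive step, assume the equality holds at stage $k$. A joint of $\widetilde{\mathcal{W}}^{(k)}$ is a transverse intersection of two same-sign trajectories at a point $\tilde p$. Its image $p=\pi(\tilde p)$ is then a joint of $\mathcal{W}^{(k)}$, since $\pi$ is a local isometry preserving signs. Conversely, at a joint $p$ of $\mathcal{W}^{(k)}$, the local sheets of $\pi^{-1}(\mathcal{W}^{(k)})$ over a neighbourhood of $p$ are isometric copies of the neighbourhood. By the inductive hypothesis both parents are present on each sheet, so each of the three preimages of $p$ is a joint upstairs. The child at $\tilde p$ is determined by its starting point and its oriented slope modulo $\frac{2\pi}{3}$, and it projects onto the child at $p$. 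The children at the three preimages of $p$ are exactly the three lifts of the child downstairs. Taking unions gives $\pi^{-1}(\mathcal{W}^{(k+1)}_\vartheta)=\widetilde{\mathcal{W}}^{(k+1)}_\vartheta$.

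The main obstacle is the behaviour at the singular points. We must check that a joint or a trajectory passing through or ending at $A$, $B$ or $C$ is handled the same way on both sides. By the definition of trajectory, trajectories stop at singularities, and $A$, $B$, $C$ are declared conical points of $\widetilde{\mathcal{W}}_\vartheta$ precisely so that trajectories upstairs also terminate there. Joints located exactly at these points would be excluded on both sides. We also need the convention that coincident or overlapping trajectories, such as double walls, are counted consistently. This holds because the lifting correspondence is defined trajectory by trajectory, so overlaps downstairs lift to overlaps upstairs.
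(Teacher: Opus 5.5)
Your induction on generation is correct, and it is the argument the paper leaves implicit: the lemma is stated there without proof, since $\widetilde{\mathcal{W}}_\vartheta$ is defined by lifting the initial walls and transporting the birthing rule through the local isometry $\pi$. Two phrasings need sharpening: the child at $\tilde p$ is fixed not by its starting point and slope class mod $\frac{2\pi}{3}$ (three directions at a regular point share that class) but by the parents' directions via the local joint rule, which commutes with the chart changes $z\mapsto\omega^r z+c$; and joints must be read branch by branch rather than trajectory by trajectory, because distinct lifts of one trajectory of $X$ can cross on $\Sigma$ (e.g.\ $\ell_{A,1},\ell_{A,2},\ell_{A,3}$ in the unramified case), so self-crossings on $X$ have to count as joints.
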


Since $\Sigma$ is a torus, the networks depend on the rationality of the slope in the appropriate sense.

\begin{definition}\label{defn:RationalUnramified}
For any $\vartheta\in \mathbb{R}$, we say that $\vartheta$ is \textbf{\emph{rational}} if $\tan\vartheta \in \sqrt{3}\,\mathbb{Q}\cup\{\infty\}$ and \textbf{\emph{irrational}} otherwise.

When $\vartheta$ is rational, we write
$$
\tan \vartheta = \sqrt{3}\cdot\frac{q}{2p-q}
$$
with coprime integers $p,q$ (defined up to simultaneous change of sign). We then define the \textbf{\emph{Eisenstein determinant}}
$$
\Delta_\vartheta=p^2-pq+q^2.
$$
If $\vartheta$ is rational, we further say that it is \textbf{\emph{ramified}} if $3 \mid \Delta_\vartheta$ and \textbf{\emph{unramified}} otherwise.
\end{definition}

\begin{remark}
    Since our definition of spectral network involves a shift in the phase by $\frac{\pi}{2}$ compared to usual, the $p,q$ defined here are \emph{not} the same as in \cite{Hollands:2016kgm}. We clarify the relation in Appendix $\ref{sec:conventions}$.
\end{remark}

According to Definition~\ref{defn:RationalUnramified}, $\vartheta$ is a rational slope if it is directed by a vector $(p,q)$ where $p,q \in \mathbb{Z}$.

\begin{definition}\label{defn:DirectionalFoliation}
On the flat torus $\Sigma = \mathbb{C} / (\mathbb{Z}+\omega\mathbb{Z})$, for any oriented slope $\vartheta$, the (oriented) \textbf{directional foliation $\mathcal{F}_{\vartheta}$} (in the direction $\vartheta$) is the projection of the foliation of $\mathbb{C}$ by (oriented) parallel straight lines of slope $\vartheta$.
\end{definition}

The following lemma is easy to see.
\begin{lemma}\label{lem:Dichotomy}
On the flat torus $\Sigma$, for any oriented slope $\vartheta$, the directional foliation $\mathcal{F}_{\vartheta}$ satisfies one of the following statements:
\begin{itemize}
    \item $\vartheta$ is {rational} and every leaf is closed, representing the same homology class of $H_{1}(\Sigma,\mathbb{Z})$;
    \item $\vartheta$ is {irrational} and every leaf is dense in $\Sigma$.
\end{itemize}
\end{lemma}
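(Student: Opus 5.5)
The plan is to lift everything to the universal cover $\mathbb{C}\to\Sigma=\mathbb{C}/\Lambda$ with $\Lambda=\mathbb{Z}+\omega\mathbb{Z}$, where leaves of $\mathcal{F}_\vartheta$ are images of the parallel lines $\{z_0+t e^{i\vartheta}: t\in\mathbb{R}\}$. The dichotomy then reduces to whether the direction $e^{i\vartheta}$ is real-proportional to a nonzero lattice vector.

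First I will match rationality with lattice directions. Write a lattice vector as $p+q\omega$, using the $\mathbb{R}$-basis $(1,\omega)$. Since $\omega=-\tfrac12+\tfrac{\sqrt3}{2}i$, we get $p+q\omega = (p-\tfrac q2)+\tfrac{\sqrt3}{2}q\,i$. Its slope is therefore $\tan\vartheta=\sqrt3\cdot\frac{q}{2p-q}$. This shows that $\vartheta$ is rational in the sense of Definition~\ref{defn:RationalUnramified} exactly when $e^{i\vartheta}$ is a real multiple of some $v=p+q\omega\in\Lambda\setminus\{0\}$. We may take $p,q$ coprime, so $v$ is primitive; the case $\tan\vartheta=\infty$ corresponds to $2p=q$. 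Conversely, given $\tan\vartheta=\sqrt3\,r$ with $r\in\mathbb{Q}$, I will solve $q/(2p-q)=r$ for integers $p,q$, which is always possible.

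In the rational case, the line through $z_0$ in direction $v$ satisfies $z_0+v\equiv z_0$ in $\Sigma$. So every leaf is closed, and its homology class is the image of $v$ under $\Lambda\cong H_1(\Sigma,\mathbb{Z})$. This class is the same for all leaves because they are parallel translates of one another. The leaf closes up exactly at $v$ and not earlier, because $v$ is primitive and the line meets $z_0+\Lambda$ only at $z_0+\mathbb{Z}v$; hence each leaf represents the primitive class $v$.

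In the irrational case, I will apply the linear change of coordinates sending $\Lambda$ to $\mathbb{Z}^2$ via the basis $(1,\omega)$. Lines stay lines, and the leaf becomes a line of slope $\beta$ in $\mathbb{R}^2/\mathbb{Z}^2$. Here $\beta$ is irrational, since otherwise the direction would be a real multiple of a lattice vector. Equivalently, the direction vector $(a,b)$ in these coordinates has $a/b\notin\mathbb{Q}$. Density then follows from the standard Kronecker argument: take the first-return map of the leaf to the closed curve $\{x=0\}$. This map is the rotation $y\mapsto y+\beta$ with $\beta\notin\mathbb{Q}$, so its orbits are dense by the pigeonhole argument. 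Flowing these dense orbits across the transversal shows the whole leaf is dense.

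The main subtlety is the rational-case direction: checking that the $\tan$ parametrisation in Definition~\ref{defn:RationalUnramified} matches lattice directions precisely, including the vertical case and the sign ambiguity. Once that correspondence is fixed, the rest is standard torus dynamics.
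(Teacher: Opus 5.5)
Your proposal is correct and supplies exactly the standard argument the paper leaves implicit: it states the lemma as ``easy to see'' with no written proof beyond the remark after Definition~\ref{defn:RationalUnramified} that rational slopes are those directed by an integer vector $(p,q)$, which is your first step. Your remaining steps are all sound: lifting to $\mathbb{C}$, matching rationality to lattice directions via $p+q\omega=(p-\tfrac q2)+\tfrac{\sqrt3}{2}q\,i$, using primitivity of $v$ to get the common closed class, and reducing the irrational case to Kronecker's theorem on $\mathbb{R}^2/\mathbb{Z}^2$.
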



\subsection{Lifted spectral networks: the case of rational slopes}\label{sub:rational}

We begin by describing the closed leaves of $\Sigma$ with rational slopes. 
We recall that for a rational slope $\vartheta$ directed by a primitive vector $v=(p,q)$, we have the Eisenstein determinant $\Delta_\vartheta=p^{2}-pq+q^{2}$. 
The description crucially depends on the value of $\Delta_\vartheta$ (note also that  $\Delta_\vartheta$ coincides with $\det(v\,|Rv)$).

\begin{lemma}\label{lem:Aligned}
In the canonical cover $\Sigma$, for a given rational slope $\vartheta$, one of the following statements holds: 
\begin{itemize}
    \item $\vartheta$ is ramified (i.e. $3 \mid\Delta_{\vartheta}$) and the points $A=0$, $B=\frac{2+\omega}{3}$ and $C=\frac{1+2\omega}{3}$ belong to the same closed leaf of slope $\vartheta$;
    \item $\vartheta$ is unramified (i.e. $3 \nmid\Delta_{\vartheta}$) and there is no closed leaf of slope $\vartheta$ containing more than one points among $A$, $B$ and $C$.
\end{itemize}

Moreover, $\Delta_{\vartheta}$ coincides with the number of intersections between a closed leaf of slope $\vartheta$ and a closed leaf of slope $R \vartheta$.
\end{lemma}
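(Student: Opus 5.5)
We work throughout in the real coordinates given by the basis $(1,\omega)$, so that $\Sigma=\mathbb{R}^2/\mathbb{Z}^2$ and the slope $\vartheta$ is directed by the primitive vector $v=(p,q)$. All questions about closed leaves then become questions in the lattice $\mathbb{Z}^2$, and the natural invariant is the integer-valued form $\det(v\,|\,\cdot)$.

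For a point $P\in\mathbb{R}^2$, consider the function
\[
f_v(P)=\det(v\,|\,P)=pP_2-qP_1 .
\]
Two points lie on the same closed leaf of slope $\vartheta$ exactly when their difference lies in $\mathbb{R}v+\mathbb{Z}^2$. Since $v$ is primitive, there exist integers $s,t$ with $\det(v\,|\,(s,t))=1$, so the image $f_v(\mathbb{Z}^2)$ is all of $\mathbb{Z}$. It follows that $f_v$ descends to a well-defined map from the leaf space to $\mathbb{R}/\mathbb{Z}$. Two points are on the same leaf if and only if $f_v$ of their difference is an integer, because $\ker f_v=\mathbb{R}v$.

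Now $B=\tfrac13(2,1)$ and $C=\tfrac13(1,2)$ in these coordinates, and $f_v(B)=\tfrac{p-2q}{3}$. We reduce everything modulo $3$. Since $\Delta_\vartheta=p^2-pq+q^2\equiv (p+q)^2-3pq\equiv(p+q)^2 \pmod 3$, we have $3\mid\Delta_\vartheta$ iff $p\equiv -q\pmod 3$. This is equivalent to $p-2q\equiv p+q\equiv 0 \pmod 3$. So $A$ and $B$ share a leaf iff $\vartheta$ is ramified.

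The same computation handles the other pairs. We have $f_v(C)=\tfrac{2p-q}{3}$, and $2p-q\equiv -(p+q)\pmod 3$. Also $C-B=\tfrac13(-1,1)$, with $f_v(C-B)=\tfrac{p+q}{3}$. Each of these is an integer under the same condition $3\mid p+q$.

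This gives both cases. In the ramified case all three points lie on one leaf. In the unramified case none of the three differences has integral $f_v$, so no leaf contains two of the points. The only point requiring care is to check that $p+q\equiv0$ cannot hold while $p\equiv q\equiv0\pmod 3$, which is ruled out by coprimality of $p$ and $q$.

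For the intersection count, let $L$ be the closed leaf through $0$ in direction $v$ and $L'$ the closed leaf in direction $Rv$. Their intersection points on the torus correspond to points of $L'$ at which $f_v$ takes an integer value. Parametrize $L'$ by $tRv$ with $t\in[0,1)$, adding a translate if $L'$ does not pass through $0$. Then $f_v$ increases linearly along $L'$ by total amount $\det(v\,|\,Rv)=\Delta_\vartheta$, which the paper notes equals $p^2-pq+q^2$. Hence it crosses exactly $\Delta_\vartheta$ integer values, and these give distinct intersection points. This uses that $Rv$ is also primitive, which holds because $R\in GL_2(\mathbb{Z})$, so $tRv$ for $t\in[0,1)$ traverses $L'$ exactly once; the count is also independent of which leaves are chosen. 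I expect no real obstacle: the main care is in the mod-$3$ bookkeeping and in confirming that $f_v$ is well defined on the leaf space.
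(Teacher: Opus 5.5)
Your proof is correct and is essentially the paper's argument. Evaluating $f_v=\det(v\,|\,\cdot)$ is exactly the paper's elimination of $t$, which yields $3(qm-pn)+(2q-p)=0$ and $s=\frac{qm-pn}{\Delta_\vartheta}$. Likewise, counting integer values of $f_v$ along one period of $L'$ is the paper's count of $s\in[0,1)$.

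The only differences are cosmetic. You check all three pairs of points directly, where the paper uses $C=B+B$. Your remark about ruling out $p\equiv q\equiv 0 \pmod 3$ is harmless but is not needed anywhere.
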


\begin{proof}
Since $C=B+B$ in $\mathbb{C}$, it follows that any closed leaf on $\Sigma$ containing both $A$ and $B$ will also contain $C$ and likewise for the other cases.
\par
For coprime integers $p,q$, let $(p,q)$ be a direction vector of the closed leaf $\ell$ of slope $\vartheta$ passing through $A=0$. The leaf $\ell$ contains $B=\left(\frac{2}{3},\frac{1}{3}\right)$\footnote{Here and throughout, vectors are written in the $\mathbb{R}$-basis $\langle 1$, $\omega\rangle$}if and only if there exist $t\in\mathbb{R}$ and $m,n\in\mathbb{Z}$ such that
$$
t \cdot (p,q)=\left(\frac{2}{3}+m,\frac{1}{3}+n\right).
$$
Eliminating $t$ gives the single Diophantine condition $3(qm-pn)+(2q-p)=0$.
\par
Since $\gcd(p,q)=1$, the map $(m,n)\mapsto qm-pn$ is surjective onto $\mathbb{Z}$, hence this condition is solvable if and only if $3 \mid (2q-p)$ which is equivalent to $3 \mid (p+q)$.
\par
As $\Delta_\vartheta \equiv (p+q)^2 \pmod 3$, this amounts to require that $\vartheta$ is ramified.

For the last statement, without loss of generality we shift the origin so that $\ell$ passes through 0. The condition for the leaf $\ell$ and $R\ell$ to intersect is for there to exist $s,t\in\mathbb{R}$, $m,n\in\mathbb{Z}$ such that
\[t\cdot(p,q)=s\cdot(-q,p-q)+(m,n)\]
which has solutions when $s=\frac{qm-pn}{p^2-p q + q^2}$. Discarding solutions which project to the same point (restricting to $0\leq s < 1)$, $s$ takes at most $p^2-pq+q^2$ values; by surjectivity again, we have exactly $p^2-pq+q^2$ values.
\end{proof}
In other words, when a rational $\vartheta$ is ramified, the network consists of ``one big leaf", so to speak, otherwise three closed saddles.

Now, we observe that whenever two critical trajectories starting at $A=0$ intersect, a third critical trajectory starting at $A$ passes through the same point.

\begin{lemma}\label{lem:TripleIntersection}
Let $v_1=(p,q)\in\mathbb Z^2$ be primitive and define
$v_2=R v_1$ and $v_3=R^2 v_1$. Let $\ell_i$ denote the closed leaf of the directional foliation of $\Sigma$ passing through $A=0$ in the direction of vector $v_{i}$.
\par
If a point $O \in \Sigma$ belongs to both $\ell_1$ and
$\ell_2$, then it also belongs to $\ell_3$. Moreover,  $\ell_{1} \cap \ell_{2} \cap \ell_{3}$ is formed by exactly $\Delta_\vartheta$ distinct points.
\end{lemma}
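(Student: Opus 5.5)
The plan is to work in the universal cover $\mathbb{C}$, using coordinates in the basis $(1,\omega)$. The three leaves through $A=0$ lift to unions of lines. Concretely, $\ell_i$ is the image of $\bigcup_{\lambda\in\Lambda}(\lambda+\mathbb{R}v_i)$ with $\Lambda=\mathbb{Z}+\omega\mathbb{Z}$. A point of $\Sigma$ lies on $\ell_i$ if and only if a (any) lift $z$ satisfies $\det(v_i\,|\,z)\in\mathbb{Z}$. Indeed, $\det(v_i\,|\,\cdot)$ is constant along lines parallel to $v_i$, and it takes the value $\det(v_i|\lambda)$ on $\lambda+\mathbb{R}v_i$. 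Since $v_i$ is primitive, $\det(v_i|\Lambda)=\mathbb{Z}$, so the condition is also sufficient.

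\textbf{Step 1 (triple intersection).} Write $f_i(z)=\det(v_i\,|\,z)$, which is linear in $z$. Because $1+R+R^2=0$ as an operator on $\mathbb{C}$ (rotations by $0$, $2\pi/3$, $4\pi/3$ sum to zero), we have $v_1+v_2+v_3=0$. Hence $f_1+f_2+f_3\equiv 0$. If $O$ lies on $\ell_1$ and $\ell_2$, then $f_1(z),f_2(z)\in\mathbb{Z}$, so $f_3(z)=-f_1(z)-f_2(z)\in\mathbb{Z}$, and $O\in\ell_3$. I would double-check that $Rv_1$ written in the basis is still primitive. This holds because $R$ is a $\mathbb{Z}$-linear automorphism of $\Lambda$, so each $v_i$ is primitive and the characterization above applies to all three leaves.

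\textbf{Step 2 (counting).} By Step 1, $\ell_1\cap\ell_2\cap\ell_3=\ell_1\cap\ell_2$. Consider the linear map $F=(f_1,f_2):\mathbb{C}\cong\mathbb{R}^2\to\mathbb{R}^2$. Its determinant is $\pm\det(v_1|v_2)=\pm\Delta_\vartheta\neq 0$, using the remark before Lemma~\ref{lem:Aligned} that $\det(v|Rv)=\Delta_\vartheta$. So $F$ is a linear isomorphism, and it sends $\Lambda$ into $\mathbb{Z}^2$. The intersection $\ell_1\cap\ell_2$ is $F^{-1}(\mathbb{Z}^2)/\Lambda$, and its cardinality is the index $[F^{-1}(\mathbb{Z}^2):\Lambda]=[\mathbb{Z}^2:F(\Lambda)]=|\det F|=\Delta_\vartheta$. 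Alternatively, one can cite the last statement of Lemma~\ref{lem:Aligned}, which already counts $\Delta_\vartheta$ intersections of a leaf of slope $\vartheta$ with one of slope $R\vartheta$. The two leaves through $0$ are exactly such leaves, and this gives the count directly.

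The main (mild) obstacle is the bookkeeping of distinct points versus lifts. Two lifts give the same point of $\Sigma$ exactly when they differ by $\Lambda$, and the index computation handles this. One could also worry about degenerate tangency, where $\ell_1=\ell_2$. This is excluded because $v_1$ and $Rv_1$ are never parallel, as shown by $\Delta_\vartheta>0$ for $(p,q)\neq 0$.
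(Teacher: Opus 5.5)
Your argument is correct, and it takes a genuinely different route from the paper.

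The paper writes a point of $\ell_1\cap\ell_2$ as $sv_1=tv_2+(m,n)$. Inverting an integer matrix of determinant $-\Delta_\vartheta$ gives $s\in\frac{1}{\Delta_\vartheta}\mathbb{Z}$, which places $\ell_1\cap\ell_2$ inside the cyclic subgroup of $\Sigma$ of order $\Delta_\vartheta$ generated by $v_1/\Delta_\vartheta$. The paper then imports the count of exactly $\Delta_\vartheta$ intersection points from Lemma~\ref{lem:Aligned} to upgrade this inclusion to an equality. It does the same for $\ell_1\cap\ell_3$ and concludes $\ell_1\cap\ell_2=\ell_1\cap\ell_3$.

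So in the paper the triple-intersection property is a by-product of counting, whereas in your proof it is structural. Describing $\ell_i$ as the locus where $\det(v_i\,|\,\cdot)$ is integral, and using $v_1+v_2+v_3=(1+\omega+\omega^2)v_1=0$, gives $\ell_1\cap\ell_2\subseteq\ell_3$ with no counting at all. Your index computation $[F^{-1}(\mathbb{Z}^2):\Lambda]=|\det F|=\Delta_\vartheta$ also makes the count independent of Lemma~\ref{lem:Aligned}.

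The paper's route has the merit of exhibiting the intersection set explicitly as a cyclic subgroup of $\Sigma$. Yours extends with essentially no extra work. Replacing $z$ by $z-B$ and $z-C$, and noting $\det(v_2\,|\,B)+\det(v_3\,|\,C)=\frac{q-2p}{3}+\frac{2q-p}{3}=q-p\in\mathbb{Z}$, proves Lemma~\ref{lem:SecondaryTripleIntersection} in one line, without the case split $p=q$ versus $p\neq q$. The same index computation, applied to the pair $(f_i,f_j)$ and a shifted copy of $\mathbb{Z}^2$, gives the ``variant'' count of $\Delta_\vartheta$ points for $\ell_{P,i}\cap\ell_{Q,j}$ that is invoked in Corollary~\ref{cor:UNRamifiedSlope}.
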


\begin{proof}
The intersection $\ell_1\cap \ell_2$ is the set of points $O$ of $\Sigma$ admitting a lift in $\mathbb R^2$ such that there exist $s,t\in\mathbb R$ and $(m,n)\in\mathbb Z^2$ such that $O = s v_1 = t v_2 + (m,n)$.
\par
Thus $(m,n)$ is of the form $s v_1 - t v_2=(sp + tq, sq+tq-tp)$. The following matrix equation is satisfied:
\[
\begin{pmatrix}
p&q\\
q&q-p
\end{pmatrix}
\begin{pmatrix}
s\\ t
\end{pmatrix}
=
\begin{pmatrix}
m\\ n
\end{pmatrix}.
\]
Since the determinant of $\begin{pmatrix}
p&q\\
q&q-p
\end{pmatrix}$
is $-\Delta_\vartheta$, we deduce that $s,t \in \frac{1}{\Delta_\vartheta} \mathbb{Z}$. In particular, $O \in \frac{v_{1}}{\Delta_\vartheta}\mathbb{Z}$.
\par
In the elliptic curve $\Sigma$ with the group law where $A=0$, $\frac{v_{1}}{\Delta_\vartheta}$ generates a cyclic subgroup $\Lambda$ of length $\Delta_\vartheta$. We proved above that any point of $\ell_1\cap \ell_2$ belongs to $\Lambda$. We already know that $\ell_1\cap \ell_2$ is formed by exactly $\Delta_\vartheta$ points (see the last claim of Lemma~\ref{lem:Aligned}) so $\ell_1\cap \ell_2$ coincides with $\Lambda$.
\par
Using the same argument, we show that $\ell_1\cap \ell_3$ coincides with $\Lambda$. It follows that $\ell_1\cap \ell_2$ coincides with $\ell_1\cap \ell_{2} \cap \ell_3$. The three pairwise intersections coincide.
\end{proof}

\subsubsection*{Ramified rational slopes}

When $\vartheta$ is a ramified rational slope, it follows from Lemma~\ref{lem:Aligned} that all the critical trajectories of $\widetilde{\mathcal{W}}^{(0)}_{\vartheta}$ reduce to the six trajectories emanating from $A$, since each of them necessarily contains both $B$ and $C$.
\par
At this point, we can give a complete description of the lifted spectral network on $\Sigma$ for ramified rational slopes.

\begin{corollary}\label{cor:RamifiedSlope}
Consider a ramified rational slope $\vartheta$ with Eisenstein determinant $\Delta_{\vartheta}$. Then, the lifted spectral network $\widetilde{\mathcal{W}_\vartheta}$ on $\Sigma$ satisfies the following conditions:
\begin{itemize}
    \item Every trajectory of $\widetilde{\mathcal{W}_\vartheta}$ is a double wall;
    \item $\widetilde{\mathcal{W}}_\vartheta$ coincides with $\widetilde{\mathcal{W}}^{(0)}_\vartheta$;
    \item $\widetilde{\mathcal{W}}_\vartheta$     decomposes $\Sigma$ into $2\Delta_\vartheta$ equilateral triangles of same size.
\end{itemize}
\end{corollary}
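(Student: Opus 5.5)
We first identify $\widetilde{\mathcal{W}}^{(0)}_\vartheta$, then show that no new trajectories are ever added, and finally count the complementary regions.

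\textbf{Step 1: the initial network.} Since $\vartheta$ is ramified, Lemma~\ref{lem:Aligned} says that the closed leaf $\ell_1$ of slope $\vartheta$ through $A$ contains $B$ and $C$. The same holds for $\ell_2=R\ell_1$ and $\ell_3=R^2\ell_1$, because $R$ preserves $\Delta_\vartheta$ and hence ramification. The six critical trajectories issuing from $A$ (three positive, three negative) run along $\ell_1,\ell_2,\ell_3$ in both directions. The trajectories issuing from $B$ and $C$ run along the same three leaves. Each maximal trajectory starting at a conical point runs along its leaf until it hits the next point of $\{A,B,C\}$, so it is a saddle connection. The opposite trajectory along the same segment also starts at a conical point, so it belongs to $\widetilde{\mathcal{W}}^{(0)}_\vartheta$ as well. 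Hence every trajectory of $\widetilde{\mathcal{W}}^{(0)}_\vartheta$ is a double wall, and as a set $\widetilde{\mathcal{W}}^{(0)}_\vartheta=\ell_1\cup\ell_2\cup\ell_3$.

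\textbf{Step 2: stability under births.} Joints of $\widetilde{\mathcal{W}}^{(0)}_\vartheta$ can only occur at intersection points of two of the leaves $\ell_i$. By Lemma~\ref{lem:TripleIntersection}, every such point lies on all three leaves. At a joint $O$ where two positive (resp. negative) trajectories meet, the child is the maximal trajectory from $O$ of the remaining positive (resp. negative) slope. That slope is one of the three directions through $O$, so the child runs along the third leaf, which is already covered in both orientations by $\widetilde{\mathcal{W}}^{(0)}_\vartheta$. The child therefore adds nothing new to the network as a set. By induction on $k$, $\widetilde{\mathcal{W}}^{(k)}_\vartheta=\widetilde{\mathcal{W}}^{(0)}_\vartheta$ for all $k$, which gives the first two bullets.

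\textbf{Step 3: counting triangles.} The three leaves are straight lines whose directions differ by $\frac{2\pi}{3}$. Lifting to $\mathbb{C}$, their preimage consists of three families of parallel lines, and any two lines from different families meet at a triple point. A triangle of three lines with pairwise angles $\frac{\pi}{3}$ is equilateral, and with all intersections triple such an arrangement is a regular triangular tiling. So the complement of the network is a union of equilateral triangles of equal size.

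To count them we use an Euler characteristic argument on the torus.
\begin{itemize}
\item Vertices: the set $\ell_1\cap\ell_2\cap\ell_3$, which has $V=\Delta_\vartheta$ points by Lemma~\ref{lem:TripleIntersection}. We must check that $A,B,C$ are among these points, and that at each such point exactly three lines cross, giving valence $6$.
\item Edges: each vertex has valence $6$, so $E=3V$.
\item Faces: $\chi(\Sigma)=0$ gives $F=E-V=2\Delta_\vartheta$.
\end{itemize}
Alternatively, the area of the torus divided by the area of one small triangle also gives $2\Delta_\vartheta$.

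The main obstacle is making the tiling claim rigorous: that the cells are genuine triangles and not larger polygons, and that all triangles have the same size. I would handle this by showing the vertex set is the coset lattice $\frac{v_1}{\Delta_\vartheta}\mathbb{Z}+\mathbb{Z}[\omega]$. Lemma~\ref{lem:TripleIntersection} identifies this set as the cyclic subgroup $\Lambda$, and this subgroup is preserved by $R$, so it is a rotated and scaled copy of the Eisenstein lattice. The three families of lines are exactly the lines through points of this lattice in the three directions, so the complement is the standard equilateral triangulation of that lattice.
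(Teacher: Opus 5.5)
Your proposal is correct and follows the paper's proof essentially step for step: the three leaves through $A$ carry $B$ and $C$, every crossing is a triple point by Lemma~\ref{lem:TripleIntersection} so children run along existing double walls, and the Euler characteristic gives $2\Delta_\vartheta$ faces; your observation that the vertex set $\Lambda$ is $R$-invariant, hence a rescaled Eisenstein lattice, justifies the ``equal equilateral triangles'' claim more cleanly than the paper's corner-angle remark. One small slip: by the paper's birthing rule two positive parents produce a \emph{negative} child (along the third leaf, traversed in the opposite sense), not a positive one, but since that leaf is a double wall your conclusion is unaffected.
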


\begin{proof}
We know from Lemma~\ref{lem:Aligned} that every critical trajectory of $\widetilde{\mathcal{W}}^{(0)}_\vartheta$ can actually be thought about as a trajectory starting at $A$. Moreover, since these trajectories are straight lines on a torus with rational slope, eventually return to their initial point; since they are ramified, they pass $B$ and $C$ to form a chain of three saddle connections. Thus $\widetilde{\mathcal{W}}^{(0)}_\vartheta$ is formed by exactly three walls, all of which are double.
\par
It follows from Lemma~\ref{lem:TripleIntersection} that these three double walls intersect pairwise exactly at the same $\Delta_\vartheta$ points (including $A,B,C$). Since there are already three double walls passing through these points, no additional trajectory can appear, and $\widetilde{\mathcal{W}}_\vartheta$ coincides with $\widetilde{\mathcal{W}}^{(0)}_\vartheta$.
\par
The corners of each connected component of $\Sigma \setminus \widetilde{\mathcal{W}_\vartheta}$ are of angle $\frac{\pi}{3}$. It follows that each of them is an equilateral triangle. Each of the $\Delta_\vartheta$ intersection points is a vertex with 6 half-edges, giving $3\Delta_\vartheta$ edges, and since $\chi(\Sigma)=0$, we have by Euler's formula that the number of triangles in the resulting triangulation is
$2\Delta_\vartheta$.
\end{proof}

We can construct the spectral network by hand following straight line trajectories on the torus; we give an example in Figure \ref{fig:ramifiednetworks_byhand} with $\Delta_\vartheta=3$ and $2\Delta_\vartheta=6$ triangles as expected.
    \begin{figure}[h]
        \centering
        \begin{subfigure}{0.35\textwidth}
                \includegraphics[width=\linewidth]{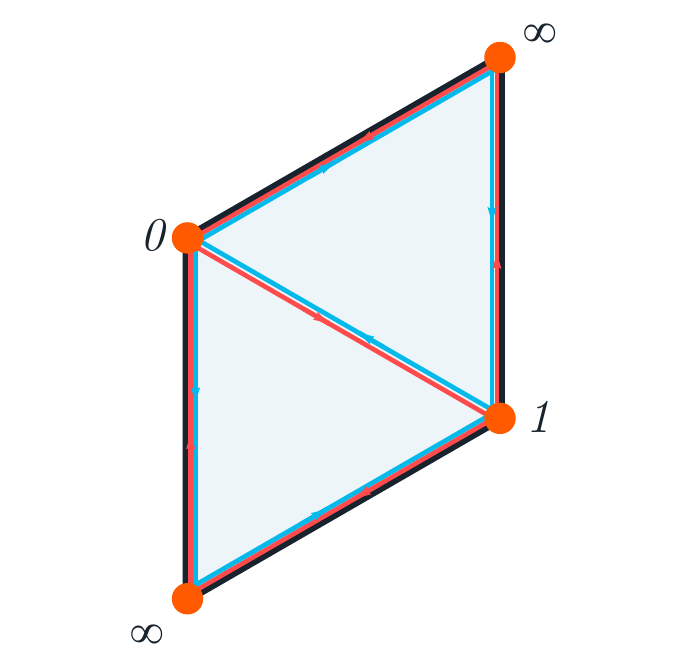}
                \caption{$(p,q)=(1,2)$ on $X=\mathbf{P}^1$}
                \label{fig:ramifiedbasep1network}
        \end{subfigure}
        \hspace{1.5cm}
        \begin{subfigure}{0.50\textwidth}
        \includegraphics[width=\linewidth]{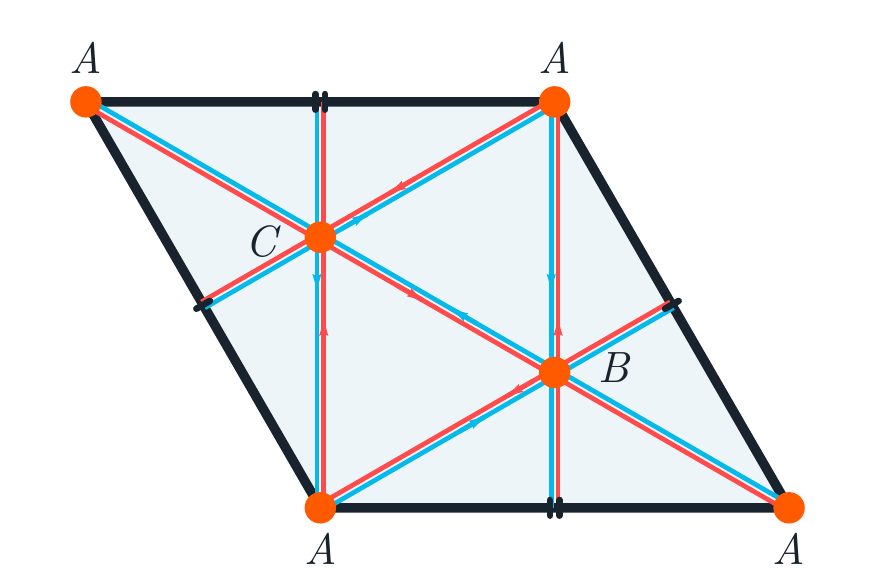}            \caption{$\widetilde{\mathcal{W}}_\vartheta$ on $\Sigma$.}\label{fig:ramifiedcovernetwork}
        \end{subfigure}

        \caption{Networks on $X$ and $\Sigma$ in the ramified case.}
        \label{fig:ramifiednetworks_byhand}
    \end{figure}

\subsubsection*{Unramified rational slopes}

Following Lemma~\ref{lem:Aligned}, when $\vartheta$ is unramified, trajectories of $\widetilde{\mathcal{W}}^{(0)}_\vartheta$ are saddle connections of $\Sigma$, each containing exactly one point among $A$, $B$ and $C$. Thus, $\widetilde{\mathcal{W}}^{(0)}_\vartheta$ is formed by exactly $9$ double walls. We denote by $\ell_{P,i}$ the double wall (in $\widetilde{\mathcal{W}}^{(0)}_\vartheta$) containing $P \in \lbrace{A,B,C}\rbrace$ of slope $R^{i-1}\vartheta$ for $i \in \lbrace 1,2,3 \rbrace$.
\par
We deduce from Lemma~\ref{lem:TripleIntersection} that for each $P \in \{A,B,C\}$, the three double walls $\ell_{P,1}, \ell_{P,2}, \ell_{P,3}$ have only triple intersection points. There are $\Delta_\vartheta$ such points, denoted by $I_{P,1},\dots,I_{P,\Delta_{\vartheta}}$ (the point $P$ is one of them). From the previous discussion, we deduce that $I_{P,i}$ and $I_{Q,j}$ are always distinct whenever $P \neq Q$. These $3\Delta_{\vartheta}$ points are intersections in $\widetilde{\mathcal{W}}^{(0)}_\vartheta$, but there are additional ones.

\begin{lemma}\label{lem:SecondaryTripleIntersection}
Any $O \in \ell_{A,1} \cap \ell_{B,2}$ is contained in $\ell_{C,3}$.
\end{lemma}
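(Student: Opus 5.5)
The plan is to describe each closed leaf of the relevant slopes on $\Sigma$ as a level set of a linear functional that is integer-valued on the lattice. The claim then reduces to one linear identity plus a congruence check at $B$ and $C$.

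\textbf{Setup.} Write $v_1=(p,q)$, $v_2=Rv_1=(-q,p-q)$ and $v_3=R^2v_1=(q-p,-p)$, working in the basis $\langle 1,\omega\rangle$. For $i\in\{1,2,3\}$ define $f_i(z)=\det(v_i\,|\,z)$, computed in these coordinates.
\begin{itemize}
\item Each $v_i$ is primitive, since $R$ preserves $\mathbb{Z}^2$ and is invertible on it. Hence $f_i(\mathbb{Z}^2)=\mathbb{Z}$, and $f_i$ descends to a well-defined map $\Sigma\to\mathbb{R}/\mathbb{Z}$.
\item Its fibres are exactly the closed leaves of slope $R^{i-1}\vartheta$. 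So for $P\in\{A,B,C\}$,
\[\ell_{P,i}=\{z\in\Sigma:\ f_i(z)\equiv f_i(P) \pmod{\mathbb{Z}}\}.\]
\item This is just the standard description of a rational linear foliation on the torus, as in Lemma~\ref{lem:Dichotomy}.
\end{itemize}

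\textbf{Key identity.} Since $1+\omega+\omega^2=0$, we have $v_1+v_2+v_3=0$, and therefore $f_1+f_2+f_3\equiv 0$. Now take $O\in\ell_{A,1}\cap\ell_{B,2}$. Then $f_1(O)\equiv f_1(A)=0$ and $f_2(O)\equiv f_2(B)$, so
\[f_3(O)\equiv -f_2(B)\pmod{\mathbb{Z}}.\]
It therefore suffices to show
\[f_3(C)\equiv -f_2(B)\pmod{\mathbb{Z}}.\]
Using $f_3=-f_1-f_2$ and $C\equiv 2B$ modulo the lattice (indeed $2B=C+1$), we get $f_i(C)\equiv 2f_i(B)$. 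The required congruence then becomes
\[2f_1(B)+f_2(B)\in\mathbb{Z}.\]

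\textbf{Explicit check.} With $B=(\tfrac23,\tfrac13)$ one computes $f_1(B)=\frac{p-2q}{3}$ and $f_2(B)=\frac{q-2p}{3}$. Hence
\[2f_1(B)+f_2(B)=\frac{2p-4q+q-2p}{3}=-q\in\mathbb{Z},\]
which proves the lemma. Notably, this holds for every primitive $(p,q)$, with no ramification hypothesis needed.

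\textbf{Main obstacle.} The only delicate point is bookkeeping. One must confirm that the determinant in the $\langle 1,\omega\rangle$ coordinates, rather than the Euclidean determinant, is the right normalization, so that lattice translates change $f_i$ by integers. One must also make sure the labelling $\ell_{P,i}$ (slope $R^{i-1}\vartheta$ through $P$) matches the $v_i$ used, with the intended orientation of $R$. If a sign convention differs, the same computation with $v_2$ and $v_3$ swapped gives $f_1(B)+2f_2(B)$-type expressions. I would recheck that version the same way: for instance, $f_1(B)+2f_2(B)=-p$ is also an integer, so the argument is robust.
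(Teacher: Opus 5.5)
Your proof is correct, and it takes a different and cleaner route than the paper's. The paper computes the intersection points explicitly, writing $O=s(p,q)$ with $s$ in a specific coset of $\frac{1}{\Delta_\vartheta}\mathbb{Z}$. It then solves the linear system $O+t(q-p,-p)\equiv C$ by hand, treating $p=q$ as a separate case. You instead describe each $\ell_{P,i}$ as a fibre of the integral functional $f_i=\det(v_i\,|\,\cdot\,)$ modulo $\mathbb{Z}$; your use of primitivity to see that each fibre is a single closed leaf is exactly right. The identity $v_1+v_2+v_3=0$ then reduces the lemma to the single congruence $2f_1(B)+f_2(B)=-q\in\mathbb{Z}$, with no need to locate $O$ and no case split. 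Your convention worries are unnecessary: with the paper's $R(a,b)=(-b,a-b)$ one gets precisely your $v_2=(-q,p-q)$ and $v_3=(q-p,-p)$, and the coordinate determinant is the right normalization. Your framework also explains \emph{why} the integrality holds. Each of $A,B,C$ is fixed by $R$ modulo the lattice (e.g.\ $RB-B=(-1,0)$), and $\det R=1$, so $f_2(P)\equiv f_1(P)$. Hence the condition for $\ell_{P,1}\cap\ell_{Q,2}\subset\ell_{S,3}$ with $S\equiv-(P+Q)$ becomes $3f_1(P)+3f_1(Q)\in\mathbb{Z}$, which always holds. This one line covers the triple-point claim of Lemma~\ref{lem:TripleIntersection} ($P=Q$), the present lemma, and the unspecified ``variant'' arguments in the proof of Corollary~\ref{cor:UNRamifiedSlope}; it also confirms your remark that no unramified hypothesis is needed. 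Moreover, $(f_1,f_2)$ is given by an integer matrix of determinant $\Delta_\vartheta$, so it is a degree-$\Delta_\vartheta$ covering $\Sigma\to(\mathbb{R}/\mathbb{Z})^2$. Each such intersection therefore consists of exactly $\Delta_\vartheta$ points, recovering the count in Lemma~\ref{lem:Aligned} as well. The paper's computation, in exchange, gives explicit coordinates for the intersection points, at the price of heavier algebra.
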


\begin{proof}A direct computation shows that $O$ is of the form $\left(\frac{i p +(j-i)q}{\Delta_{\vartheta}}+\frac{2q-p}{3\Delta_{\vartheta}}\right)(p,q)$, for $i,j\in\mathbb{Z}$.
Now, we consider the leaf $\ell$ of slope $R^{2}(\vartheta)$ containing $O$. Choosing $(q-p,-p)$ as a direction vector of $\ell$, $C=(\frac{1}{3},\frac{2}{3})$ belongs to $\ell$ if and only if there exists $t \in \mathbb{R}$ and $m,n \in \mathbb{Z}$ such that $$\frac{3(i p +(j-i)q)+2q-p}{3\Delta_{\vartheta}}(p,q)+t(q-p,-p) = (\frac{1}{3},\frac{2}{3}) + (m,n).$$
{Supposing $p-q\neq0$, direct computation shows that this equation has a solution if and only if $n=j-i+\frac{p(j-m)}{q-p}$, which can always be ensured to be integer by choosing $m$ appropriately.
On the other hand, if $p=q$, the equation simplifies to 
\[
\left(j+\frac{1}{3}\right)(1,1)+(0,-t p) = (\frac{1}{3},\frac{2}{3})+(m,n)
\]}
\noindent so that choosing $m=j$ to satisfy the first equation, we are left with $j+\frac{1}{3}-t p = \frac{2}{3}+n$, which always has a solution.

\end{proof}

Analogous to Corollary~\ref{cor:RamifiedSlope}, we conclude that 

 \begin{corollary}\label{cor:UNRamifiedSlope}
Consider an unramified rational slope $\vartheta$ with Eisenstein determinant $\Delta_{\vartheta}$. Then, the lifted spectral network $\widetilde{\mathcal{W}}_\vartheta$ on $\Sigma$ satisfies the following conditions:
\begin{itemize}
    \item Every trajectory of $\widetilde{\mathcal{W}_\vartheta}$ is a double wall;
    \item $\widetilde{\mathcal{W}_\vartheta}$ coincides with $\widetilde{\mathcal{W}}^{(0)}_\vartheta$;
    \item $\widetilde{\mathcal{W}_\vartheta}$     decomposes $\Sigma$ into $18\Delta_{\vartheta}$ equilateral triangles of same size.
\end{itemize}
\end{corollary}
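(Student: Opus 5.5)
The plan follows the proof of Corollary~\ref{cor:RamifiedSlope}: first show that the nine initial walls are closed, double, and meet only at points where three of them pass simultaneously. Then no joint produces a new trajectory, and an Euler characteristic count gives the number of triangles.

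\emph{Initial network.} By Lemma~\ref{lem:Aligned}, in the unramified case each closed leaf of slope $R^{i-1}\vartheta$ through $P\in\{A,B,C\}$ contains no other ramification point. Each such leaf is therefore a single saddle connection from $P$ to itself, traversed in both orientations, so it is a double wall. This gives the nine double walls $\ell_{P,i}$.

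\emph{Classifying intersections.} Two initial walls of different slopes either share the same base point $P$, or have distinct base points. In the first case, Lemma~\ref{lem:TripleIntersection} shows the intersection point lies on the third wall $\ell_{P,k}$ as well. In the second case we use Lemma~\ref{lem:SecondaryTripleIntersection}: any point of $\ell_{A,1}\cap\ell_{B,2}$ lies on $\ell_{C,3}$. To get the other configurations $\ell_{P,i}\cap\ell_{Q,j}$ with $P\neq Q$, $i\neq j$, we transport this one. Rotation $R$ about $A$ maps the set $\{A,B,C\}$ to itself; since $B$ and $C$ are the fixed points of the order-three rotation, $RB$ equals $B$ or $C$ modulo the lattice. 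Translations by $B$ also preserve the network, since $B+B=C$ and $B+C=A$ modulo the lattice. These symmetries permute base points and slopes, which yields all cases. Walls of equal slope are disjoint parallel closed leaves, so they do not meet. Hence every intersection point is a triple point through which three double walls pass, at mutual angles $\pi/3$.

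\emph{Closure and counting.} At each joint, the child trajectory that would be born is already the third double wall through that point, with the correct sign. So no new trajectory appears, $\widetilde{\mathcal{W}}_\vartheta=\widetilde{\mathcal{W}}^{(0)}_\vartheta$, and every trajectory is double. The complementary regions have all corners of angle $\pi/3$, hence are equilateral triangles. They all have the same size because the walls form three families of equally spaced parallel lines.

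For the count, $V$ is the number of triple points, each of valency $6$, so $E=3V$. Euler's formula $V-E+F=0$ then gives $F=2V$. We need $V=9\Delta_\vartheta$, giving $18\Delta_\vartheta$ triangles. The three mixed-base families of points through $A,B,C$ contribute $3\Delta_\vartheta$ points $I_{P,k}$. The secondary points are the sets $\ell_{P,i}\cap\ell_{Q,j}$ for $P\neq Q$. Each pair of lines $\ell_{P,i},\ell_{Q,j}$ with $i\neq j$ meets in $\Delta_\vartheta$ points, by the last claim of Lemma~\ref{lem:Aligned} after translating. Each secondary triple point lies on exactly one wall of each slope, with three distinct base points. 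So each is determined by choosing which base point goes with slope $1$ and which with slope $2$ (the third is forced), giving $3\cdot 2=6$ assignments, of which we must determine how many are realized by the symmetry.

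The main obstacle is exactly this bookkeeping: proving that secondary points are triple with a definite base-point pattern, and counting them as $6\Delta_\vartheta$. The pattern from Lemma~\ref{lem:SecondaryTripleIntersection} is $(A,B,C)$, and its images give at most the $2$ cyclic-type classes times $\Delta_\vartheta$ per choice. I would check directly with the parametrization in that lemma which permutations occur, expecting the $6$ derangement-compatible assignments. As a cross-check, I would compare with the projected count $6\Delta_\vartheta=18\Delta_\vartheta/3$ in Theorem~\ref{thm:MAIN}.
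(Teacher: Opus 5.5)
Your outline follows the paper's proof (nine closed double walls, every crossing a triple point, Euler count), but the key step, that every crossing $\ell_{P,i}\cap\ell_{Q,j}$ with $P\neq Q$, $i\neq j$ is a triple point, does not follow from the symmetries you list. The rotation $z\mapsto\omega z$ fixes each of $A,B,C$ (so $RB=B$) and sends $\ell_{P,i}$ to $\ell_{P,i+1}$. The translation $\tau_B\colon z\mapsto z+B$ sends $\ell_{P,i}$ to $\ell_{P+B,i}$.

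Record a secondary triple point by its pattern $\sigma$ (slope index $\mapsto$ base point). Both maps preserve the property $\sigma(i+1)=\sigma(i)+B$. So the images of Lemma~\ref{lem:SecondaryTripleIntersection} only give the three cyclic patterns $(A,B,C)$, $(B,C,A)$, $(C,A,B)$, which cover $9$ of the $18$ relevant pairs of walls. A point of $\ell_{A,1}\cap\ell_{C,2}$, say, cannot lie on a cyclic triple: its slope-$2$ wall would have to be $\ell_{B,2}$, which is disjoint from $\ell_{C,2}$. As far as your argument goes, such points could be ordinary crossings, i.e.\ genuine joints giving birth to new walls, and the second bullet would fail.

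The missing ingredient is the involution $z\mapsto -z$. It fixes $A$, exchanges $B$ and $C$ (as $B+C\equiv 0$), and preserves each unoriented slope class. It also maps $\widetilde{\mathcal{W}}_\vartheta$ to itself, exchanging positive and negative walls, which the birth rule respects. It turns Lemma~\ref{lem:SecondaryTripleIntersection} into $\ell_{A,1}\cap\ell_{C,2}\subset\ell_{B,3}$. Then $R$ and $\tau_B$ give the other two anti-cyclic patterns. Alternatively, you can redo that lemma's computation for $\ell_{A,1}\cap\ell_{C,2}$. The paper's own proof covers this only with ``applying Lemma~\ref{lem:SecondaryTripleIntersection}'', so this symmetry is what you need to make explicit.

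The count you leave open is then immediate. Do not cross-check it against Theorem~\ref{thm:MAINbody}: its triangle count is deduced from this corollary, so the check is circular.

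\begin{itemize}
\item Distinct walls of the same slope are disjoint parallel closed leaves, so each vertex lies on at most one wall of each slope.
\item By Lemma~\ref{lem:TripleIntersection} (transported by $\tau_B$), a vertex on two walls with the same base point has constant pattern. These are the $3\Delta_\vartheta$ points $I_{P,k}$; they are same-base points, not ``mixed-base''.
\item Once triple-ness holds for all six bijective patterns $\sigma$, the vertices with pattern $\sigma$ are exactly $\ell_{\sigma(1),1}\cap\ell_{\sigma(2),2}$. This set has $\Delta_\vartheta$ points by the last part of Lemma~\ref{lem:Aligned}.
\end{itemize}

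So all $3!=6$ patterns are realized, and you need not ask which ones the symmetries produce. This gives $V=3\Delta_\vartheta+6\Delta_\vartheta=9\Delta_\vartheta$, matching the paper's count of $18\cdot\Delta_\vartheta/3$ secondary points. Your Euler computation then yields $18\Delta_\vartheta$ triangles.
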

\begin{proof}

Consider the leaves $\ell_{P,i}, \ell_{Q,j}$ for any $P,Q \in \{A,B,C\}$ and $i,j\in\{1,2,3\}$. If $i=j$ then the leaves are parallel, so either coincide or never intersect (except possibly at $P,Q$). If $i\neq j$, and $P=Q$, then Lemma \ref{lem:TripleIntersection} tells us there are $\Delta_\vartheta$ intersection points. In fact, a variant of that argument tells us that for any $P \neq Q$ and $i \neq j$, the leaf $\ell_{P,i}$ intersects with $\ell_{Q,j}$ in exactly $\Delta_{\vartheta}$ points. Applying Lemma~\ref{lem:SecondaryTripleIntersection}, we obtain that these intersection points are actually triple intersections. Thus, no new walls are formed at joints, and all walls are saddle connections in $\widetilde{\mathcal{W}}_\vartheta^{(0)}$.

Furthermore, all these triple points are clearly distinct from the points of the form $I_{P,k}$, $k=1,\ldots \Delta_\vartheta$ discussed above. Each triple point accounts for three pairwise intersections so there are $6\Delta_{\vartheta}$ such additional triple points.
\par
In total, the $9$ double walls of $\widetilde{\mathcal{W}}^{(0)}_\vartheta$ intersect in $9\Delta_{\vartheta}$ vertices and all of them are triple points. It follows that no additional trajectory appears there and $\widetilde{\mathcal{W}}_\vartheta$ coincides with $\widetilde{\mathcal{W}}^{(0)}_\vartheta$.

The final count follows again from an Euler characteristic calculation. Each of the $9\Delta_{\vartheta}$ vertices is a triple point so it is incident to $6$ half-edges. It follows that $\widetilde{\mathcal{W}}^{(0)}_\vartheta$ is formed by $27\Delta_{\vartheta}$ edges. Then, the number of faces in the resulting triangulation is $27\Delta_{\vartheta}-9\Delta_\vartheta =18\Delta_\vartheta$.
\end{proof}

    \begin{figure}[h]
        \centering
        \begin{subfigure}{0.35\textwidth}
                \includegraphics[width=\linewidth]{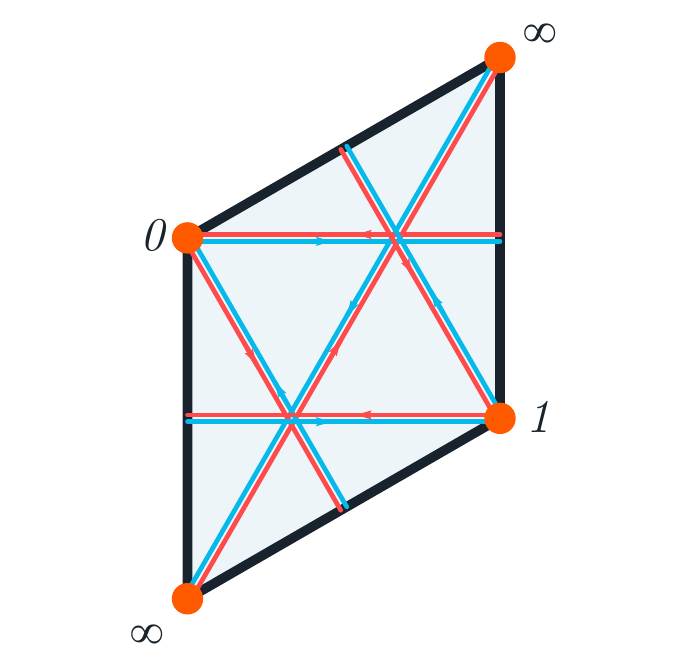}
                \caption{$(p,q)=(1,1)$ on $X=\mathbf{P}^1$}
                \label{fig:basep1network}
        \end{subfigure}
        \hspace{1.5cm}
        \begin{subfigure}{0.50\textwidth}
        \includegraphics[width=\linewidth]{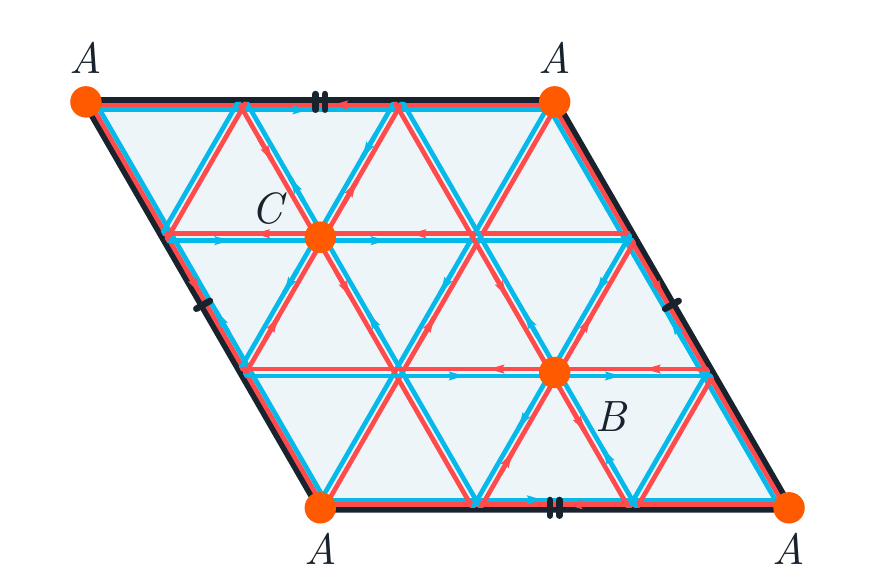}            \caption{$\widetilde{\mathcal{W}}_\vartheta$ on $\Sigma$.}\label{fig:covernetwork}
        \end{subfigure}

        \caption{Networks on $X$ and $\Sigma$ in the unramified case.}
        \label{fig:networks_byhand}
    \end{figure}

\subsection{Lifted spectral networks: the case of irrational slopes}\label{sub:irrational}

In this section, we describe the lifted spectral network $\widetilde{\mathcal{W}}_\vartheta$ for an irrational slope  $\vartheta$. 

\begin{proposition}\label{prop:IrrationalSlope}
Consider an irrational slope $\vartheta$. Then, the lifted spectral network $\widetilde{\mathcal{W}_\vartheta}$ on $\Sigma$ satisfies the following conditions:
\begin{itemize}
    \item for any $k \in \mathbb{N}$, $\widetilde{\mathcal{W}}^{(k)}_\vartheta$ is dense in $\Sigma$;
    \item $\widetilde{\mathcal{W}_\vartheta}$ does not contain any double wall.
\end{itemize}
\end{proposition}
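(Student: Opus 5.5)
The first claim is essentially immediate. The initial network $\widetilde{\mathcal{W}}^{(0)}_\vartheta$ contains the critical trajectories emanating from $A$ in the direction $\vartheta$. These are half-leaves of the directional foliation $\mathcal{F}_\vartheta$ on the flat torus $\Sigma$.

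I would first check that such a half-leaf never hits a conical point, so that it is infinite. A trajectory from $A$ in an irrational direction cannot reach $B$ or $C$: that would require $B$ or $C$ to lie on a line through $0$ of slope $\vartheta$ modulo $\mathbb{Z}[\omega]$. Since $B,C\in\frac13\mathbb{Z}[\omega]$, the direction would then be rational. A trajectory also cannot return to $A$, since that would produce a closed leaf, and by Lemma~\ref{lem:Dichotomy} no leaf is closed in the irrational case.

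Next I would show that an infinite half-leaf of an irrational linear flow on the torus is dense. This is the standard Kronecker/Weyl argument, the half-leaf version of Lemma~\ref{lem:Dichotomy}: consider the first-return map to a closed transversal, which is an irrational rotation. It follows that $\widetilde{\mathcal{W}}^{(0)}_\vartheta$ is dense. Since $\widetilde{\mathcal{W}}^{(0)}_\vartheta\subset\widetilde{\mathcal{W}}^{(k)}_\vartheta$ for every $k$, density holds for every $k$.

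For the second claim, I argue by contradiction and follow the strategy of Lemma~\ref{lem:walltoweb}. Suppose $\widetilde{\mathcal{W}_\vartheta}$ contains a double wall. By Lemma~\ref{lem:walltoweb}, applied on $\Sigma$ or on $X$ via the projection of Lemma~\ref{lem:LiftProjection}, it lies in the image of a finite web. A finite web is a finite graph with at least one external vertex mapped to a conical singularity. So there is a double wall of finite length ending at one of $A$, $B$, $C$.

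Tracing ancestry backwards by Lemma~\ref{lem:doubleparents} and induction on generation, every double wall's ancestors are double. Eventually we reach a generation-$0$ double wall, which is a critical trajectory from some $P\in\{A,B,C\}$ that is also a trajectory in the opposite direction. I must rule this out. A generation-$0$ trajectory being double over a segment means the opposite orientation of the same segment is a trajectory of the network, whether a critical one or one born at a joint.

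The cleanest route is metric. The finite web has finitely many edges of finite length, so its canonical lift is a closed cycle $\gamma\in H_1(\Sigma,\mathbb{Z})$. By Lemma~\ref{lem:nonzero}, $Z(\gamma)\neq0$ has phase $\vartheta+\frac{\pi}{2}$ up to $\frac{2\pi}{6}$. On the other hand $Z(\gamma)=\int_\gamma\lambda\in\mathbb{Z}[\omega]$, because $\lambda=dz$ on $\mathbb{C}/\mathbb{Z}[\omega]$. A nonzero Eisenstein integer has a rational direction in the sense of Definition~\ref{defn:RationalUnramified}. This set of directions is invariant under rotation by $\frac{\pi}{2}$ composed with multiples of $\frac{\pi}{3}$, since $i\sqrt3\in\mathbb{Z}[\omega]$. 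Hence $\vartheta$ would be rational, a contradiction. So there is no double wall, and hence no finite web.

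The main obstacle is making sure that the finite web's canonical lift is honestly a closed class in $H_1(\Sigma,\mathbb{Z})$, with its period equal to an element of $\mathbb{Z}[\omega]$. Here the lifting is done on $X$ and then lifted to $\Sigma$, and the end lifts around $A$, $B$, $C$ close up correctly. I would verify the rotation-invariance of rational directions explicitly: multiplying $p+q\omega$ by $i\sqrt3=1+2\omega$ gives another Eisenstein integer.
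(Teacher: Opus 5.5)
Your proposal is correct and follows essentially the same route as the paper: density comes from the torus dichotomy (Lemma~\ref{lem:Dichotomy}). Double walls are excluded by passing to a finite web via Lemma~\ref{lem:walltoweb} and contradicting Lemma~\ref{lem:nonzero}, since periods of $dz$ lie in $\mathbb{Z}[\omega]$ and so have rational phase. You are in fact more careful than the paper: you check that half-leaves from $A$ never reach a conical point and are dense as half-leaves, and you verify via $i\sqrt{3}=1+2\omega$ that rotation by $\frac{\pi}{2}$ preserves rational directions; the ancestry-tracing detour is unnecessary and can be dropped.
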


\begin{proof}
Following Lemma~\ref{lem:Dichotomy}, any trajectory of the lifted spectral network $\widetilde{\mathcal{W}_\vartheta}$ is dense in $\Sigma$.
\par
If there is a double wall in $\widetilde{\mathcal{W}}_\vartheta$, its projection must be a double wall in $\mathcal{W}_\vartheta$, so by Lemma \ref{lem:walltoweb} we obtain a finite web supported on $\mathcal{W}_\vartheta$. To every finite web we have associated its canonical lifts, which are nonzero integral homology classes of phase $\vartheta+\frac{\pi}{2}$ (up to rotation by $\frac{2\pi}{6})$ in $H_1(\Sigma,\mathbb{Z})$ by Lemma \ref{lem:nonzero}. But such a number must be of rational phase, since every homology class can be represented by a straight segment between the lattice points of $\mathbb Z+\omega \mathbb{Z}$, contradicting the hypothesis.
\end{proof}

\subsection{Proof of the main theorem}

In Sections~\ref{sub:rational} and~\ref{sub:irrational}, we described the lifted spectral network $\widetilde{\mathcal{W}_\vartheta}$ on the canonical cover $\Sigma$ in both the rational and irrational cases. We now project $\widetilde{\mathcal{W}_\vartheta}$ onto $X$ in order to obtain the spectral network $\mathcal{W}_\vartheta$ and its properties.

\begin{theorem}\label{thm:MAINbody}
The spectral network $\mathcal{W}_\vartheta(\varphi)$ satisfies the following properties.
\par
If $\vartheta$ is rational with Eisenstein determinant $\Delta_{\vartheta}$, then $\mathcal{W}_\vartheta(\varphi)$ coincides with $\mathcal{W}^{(0)}_\vartheta(\varphi)$ and every trajectory of $\mathcal{W}_\vartheta(\varphi)$ is a double wall in the image of some finite web. Moreover, $\mathcal{W}_\vartheta(\varphi)$ decomposes $X$ into
\begin{itemize}
    \item $\frac{2}{3}\Delta_{\vartheta}$ equilateral triangles of the same size if $\vartheta$ is ramified (i.e. $3 \mid \Delta_{\vartheta}$);
    \item $6\Delta_{\vartheta}$ equilateral triangles of the same size if $\vartheta$ is unramified (i.e. $3 \nmid \Delta_{\vartheta}$).
\end{itemize}

If $\vartheta$ is irrational, then $\mathcal{W}_\vartheta(\varphi)$ is dense in $X$ and does not contain any double wall (nor finite web).
\end{theorem}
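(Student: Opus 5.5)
The plan is to deduce everything about $\mathcal{W}_\vartheta$ from the corresponding statements for $\widetilde{\mathcal{W}}_\vartheta$ on $\Sigma$, using Lemma~\ref{lem:LiftProjection}. Since $\pi^{-1}(\mathcal{W}_\vartheta)=\widetilde{\mathcal{W}}_\vartheta$ and $\pi$ is surjective, we get $\mathcal{W}_\vartheta=\pi(\widetilde{\mathcal{W}}_\vartheta)$. The same reasoning applies level by level: the initial trajectories on $\Sigma$ are by definition the preimages of those on $X$, and joints and births correspond under the local isometry $\pi$ away from $A,B,C$. Hence $\pi^{-1}(\mathcal{W}^{(k)}_\vartheta)=\widetilde{\mathcal{W}}^{(k)}_\vartheta$ for each $k$. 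Also, $\pi$ is a local isometry off the ramification points that preserves slopes mod $\frac{2\pi}{3}$, and so it preserves signs of trajectories. It follows that a trajectory in $\mathcal{W}_\vartheta$ is a double wall exactly when its lifts are double walls in $\widetilde{\mathcal{W}}_\vartheta$.

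\emph{Rational case.} By Corollaries~\ref{cor:RamifiedSlope} and~\ref{cor:UNRamifiedSlope}, we have $\widetilde{\mathcal{W}}_\vartheta=\widetilde{\mathcal{W}}^{(0)}_\vartheta$ and every trajectory there is a double wall. Projecting gives $\mathcal{W}_\vartheta=\mathcal{W}^{(0)}_\vartheta$ with all trajectories double. Lemma~\ref{lem:walltoweb} then places each of them in the image of a finite web. For the triangle count, the complement $\Sigma\setminus\widetilde{\mathcal{W}}_\vartheta$ is a union of open equilateral triangles. The deck group $\mathbb{Z}/3$, acting by rotation $R$ about $A$ (equivalently about $B$ and $C$), preserves $\widetilde{\mathcal{W}}_\vartheta$, since it is the preimage of a set in $X$. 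No open face can contain a fixed point, because the fixed points $A,B,C$ lie on the network. The action is therefore free on faces, and $\pi$ maps each face isometrically onto a face of $X\setminus\mathcal{W}_\vartheta$. So the number of faces in $X$ is one third of the number in $\Sigma$: $\frac{2\Delta_\vartheta}{3}$ in the ramified case and $\frac{18\Delta_\vartheta}{3}=6\Delta_\vartheta$ in the unramified case. All faces are equilateral triangles of the same size. One should check that a triangle is not mapped to itself by a nontrivial deck transformation; this cannot happen, since such a rotation would fix the triangle's centre, which is not one of $A,B,C$.

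\emph{Irrational case.} Density of $\mathcal{W}_\vartheta$ follows from density of $\widetilde{\mathcal{W}}^{(0)}_\vartheta$ (Proposition~\ref{prop:IrrationalSlope}), since $\pi$ is continuous and surjective and therefore maps dense sets to dense sets. For the absence of finite webs, I would argue directly rather than through the lift. Suppose a finite web supported on $\mathcal{W}_\vartheta$ exists. Lemma~\ref{lem:nonzero} gives a canonical lift $\gamma\in H_1(\Sigma,\mathbb{Z})$ with $Z(\gamma)\neq0$ of phase $\vartheta+\frac{\pi}{2}$ modulo $\frac{\pi}{3}$. Because $Z(H_1(\Sigma,\mathbb{Z}))=\mathbb{Z}+\omega\mathbb{Z}$, this phase would have to be rational. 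The set of rational phases, $\tan\in\sqrt3\mathbb{Q}\cup\{\infty\}$, is stable under adding $\frac{\pi}{2}$ and multiples of $\frac{\pi}{3}$, because these rotations act on the Eisenstein lattice $\otimes\mathbb{Q}$. This contradicts the irrationality of $\vartheta$. Any double wall would, by Lemma~\ref{lem:walltoweb}, produce such a web, so there are no double walls either.

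The main obstacle I expect is justifying the level-by-level correspondence $\pi^{-1}(\mathcal{W}^{(k)})=\widetilde{\mathcal{W}}^{(k)}$ near the ramification points, together with the face count under the deck action. The worry is that a vertex at $A,B,C$ with cone angle $\frac{2\pi}{3}$ downstairs might produce a triangle that wraps around a cone point. I would handle this by observing that the cone points are vertices of the network, so faces avoid them.
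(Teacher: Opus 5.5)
Your proposal is correct and follows essentially the same route as the paper. You project $\widetilde{\mathcal{W}}_\vartheta$ to $X$ via Lemma~\ref{lem:LiftProjection}, use Corollaries~\ref{cor:RamifiedSlope} and~\ref{cor:UNRamifiedSlope} with Lemma~\ref{lem:walltoweb} in the rational case (dividing the triangle count by $3$), and in the irrational case use density plus rationality of the central charge lattice, which is the argument inside the proof of Proposition~\ref{prop:IrrationalSlope}; your deck-group/centroid argument that faces are permuted freely is a useful explicit justification of the paper's one-line ``divide by $3$'' step.
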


\begin{proof}
Lemma~\ref{lem:LiftProjection} proves that $\mathcal{W}_\vartheta$ coincides with the projection of $\widetilde{\mathcal{W}_\vartheta}$ to $X$.
\par
If $\vartheta$ is rational of Eisenstein determinant $\Delta_{\vartheta}$, Corollaries~\ref{cor:RamifiedSlope} and~\ref{cor:UNRamifiedSlope} describe $\widetilde{\mathcal{W}_\vartheta}$:
\begin{itemize}
    \item $\widetilde{\mathcal{W}}_\vartheta$ coincides with $\widetilde{\mathcal{W}}^{(0)}_\vartheta$;
    \item $\widetilde{\mathcal{W}_\vartheta}$ is formed by double walls only;
    \item $\widetilde{\mathcal{W}}_\vartheta$ decomposes $\Sigma$ into $2\Delta_\vartheta$ or $18\Delta_\vartheta$ equilateral triangles of same size (depending on whether $\vartheta$ is ramified or unramified).
\end{itemize}
Thus, the same properties hold for its projection
as formed by double walls $\mathcal{W}_\vartheta$. As the ramification points are vertices of the decomposition, we just divide the number of triangles by $3$. By Lemma \ref{lem:walltoweb}, the double walls must be in the image of some finite web.
\par
If $\vartheta$ is irrational, the claim follows from Proposition~\ref{prop:IrrationalSlope}.
\end{proof}

\appendix
\section{Comparison with Hollands-Neitzke's parametrization}\label{sec:conventions}

Since our spectral networks are rotated compared to those of \cite{Hollands:2016kgm}, the $p,q$ appearing in this paper are not the same as the ones there. Our $\vartheta$ is related to theirs by  $\vartheta^{\mathrm{HN}}=\vartheta+\frac{\pi}{2}$, so that

\[
\tan\vartheta
=-\frac{1}{\tan{\vartheta^{\mathrm{HN}}}}
\]

\noindent Let us use $\mathsf{p}$, $\mathsf{q}$ for their parametrization \cite{Hollands:2016kgm}, i.e.
\[
\tan \vartheta^{\mathrm{HN}}={\sqrt{3}}\frac{\mathsf{q}}{\mathsf{q}-2\mathsf{p}}
\]
with $\mathsf{p},\mathsf{q}$ coprime, and continue to use $p$ and $q$ following our convention as in the body of the paper. Equating both sides we rearrange to obtain 

\[\frac{{p}}{{q}}=\frac{\mathsf{p}+\mathsf{q}}{2\mathsf{p}-\mathsf{q}}\]

It is easy to see that $3\mid \mathsf{p}+\mathsf{q}$ if and only if $3 \mid 2\mathsf{p}-\mathsf{q}$. Since $\mathsf{p},\mathsf{q}$ are coprime, there are two cases: either $\mathsf{p}+\mathsf{q}$ and $2\mathsf{p}-\mathsf{q}$ are coprime (thus both not divisible by $3$), or their greatest\footnote{Suppose $\mathsf{p}+\mathsf{q} = d\cdot k$, $2\mathsf{p}-\mathsf{q}= d \cdot l$ for some positive integer $d$. Then adding and subtracting the equations we get $3\mathsf{p}$ and $3\mathsf{q}$ are both multiples of $d$, violating coprimality unless $d=1,3$.} common divisor is $3$. Thus we have (up to signs)

\[(p,q)=\begin{cases}   \,\,\,\,(\mathsf{p}+\mathsf{q},2\mathsf{p}-\mathsf{q}), \quad &3\nmid \mathsf{p}+\mathsf{q} \\
   \frac{1}{3}(\mathsf{p}+\mathsf{q},2\mathsf{p}-\mathsf{q}), \quad &3 \,\mid\; \mathsf{p}+\mathsf{q},
\end{cases}\]
In fact, $\Delta_{\vartheta}^{\mathrm{HN}}:=\mathsf{p}^2-\mathsf{p}\mathsf{q}+\mathsf{q}^2$ is divisible by $3$ if and only if $\Delta_{\vartheta}$ is not divisible by $3$; the same quadratic expression appears in both conventions, but with the opposite meaning.

With this dictionary, we can check exactly that every network appearing in their paper is of the type required by our theorem:

\[
\begin{array}{c|c|c|c|c|c}
(\mathsf{p},\mathsf{q}) & (p,q)
& p^2-pq+q^2
& \Delta \bmod 3 & \text{\# transverse double walls} & \text{\# triangles}\\  \hline
(1,0) & (1,2)  & 3  & 0 & 1 & 2\\
(1,1) & (2,1) & 3  & 0 & 1 & 2\\
(1,2) & (1,0)  & 1  & 1 & 3 & 6\\
(1,3) & (4,-1)  & 21 & 0 & 1 & 14 \\
(2,5) & (7,-1)  & 57 & 0 & 1 & 38
\end{array}
\]

For comparison to concrete numerical calculations of spectral networks, we include some figures plotted using \cite{neitzkeswn}. The plots below are taken directly from \cite{Hollands:2016kgm}, reproduced with permission (we have removed some labels that we refrained from introducing). For numerical convenience, the conical singularities are placed at $1,\omega,\omega^2$ by an automorphism of $\mathbf{P}^1$.

\begin{figure}[h]
    \centering

            \begin{subfigure}{0.4\textwidth}
                \includegraphics[width=0.7\linewidth]{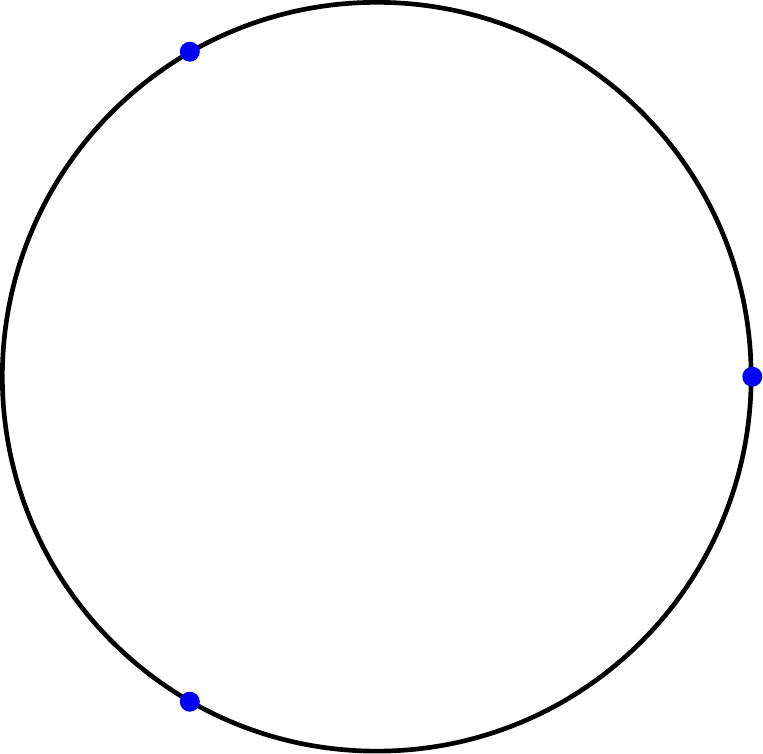}
                \caption{$(\mathsf{p},\mathsf{q})=(1,0)$, i.e. $(p,q)=(1,2)$.}
                \label{fig:circlenetwork}
        \end{subfigure}
        \hspace{1.5cm}
        \begin{subfigure}{0.4\textwidth}
        \includegraphics[width=0.85\linewidth]{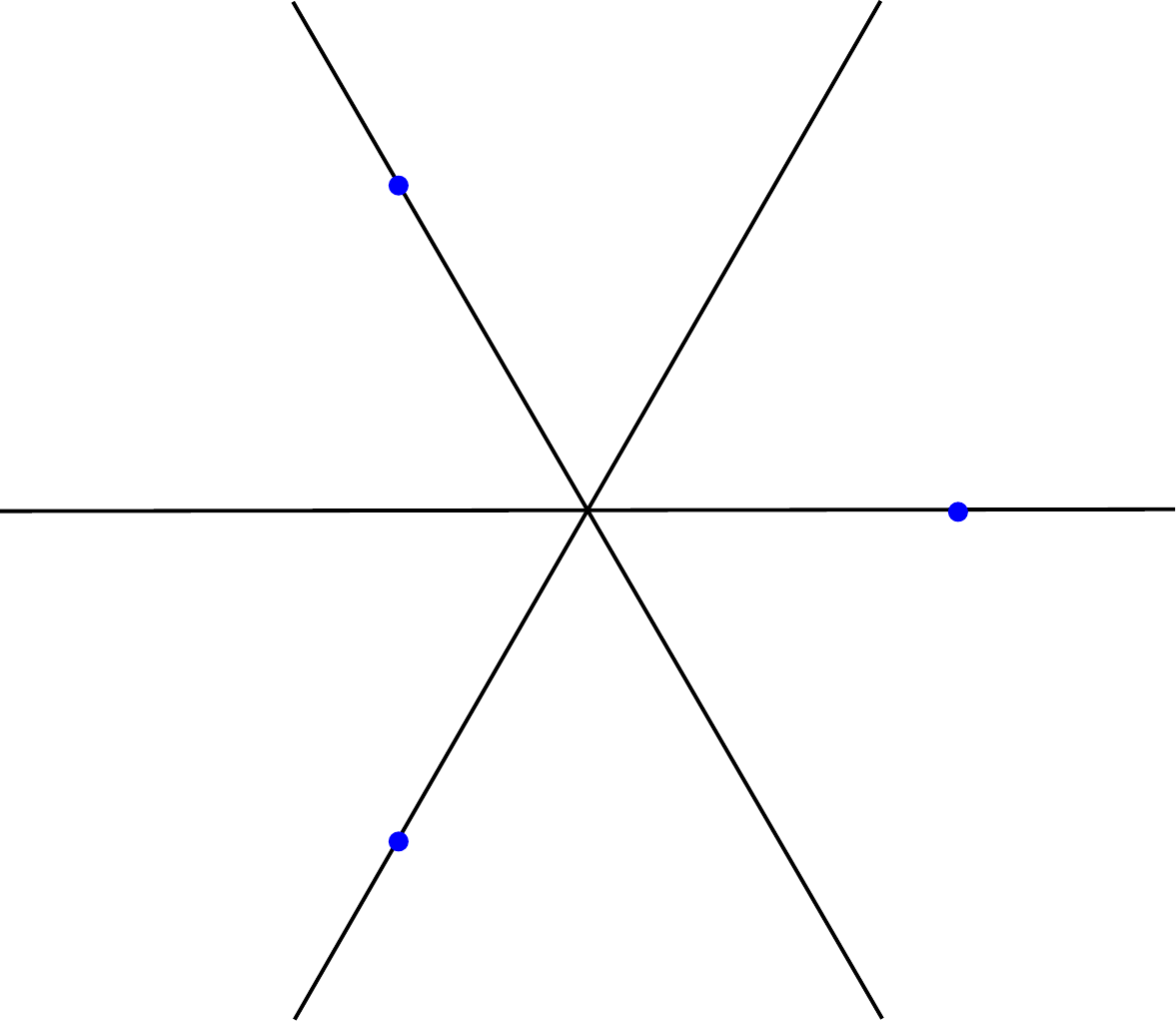}            \caption{$(\mathsf{p},\mathsf{q})=(1,2)$, i.e. $(p,q)=(1,0)$.}\label{fig:starnetwork}
        \end{subfigure}
\vspace{0.5cm}
        
            \begin{subfigure}{0.45\textwidth}
                \includegraphics[width=0.9\linewidth]{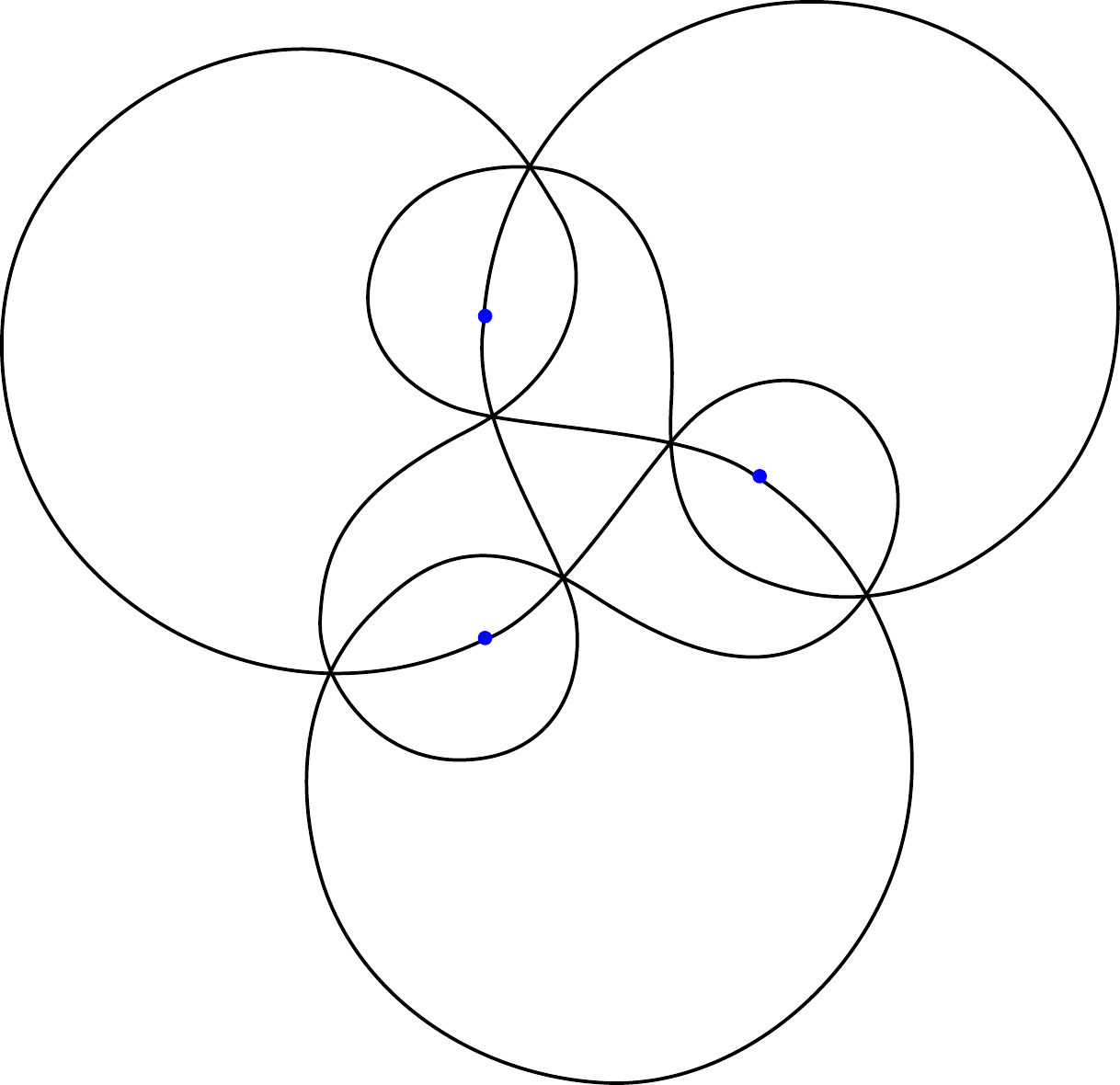}
                \caption{$(\mathsf{p},\mathsf{q})=(1,3)$, i.e. $(p,q)=(4,-1)$.}
                \label{fig:232network}
        \end{subfigure}
        \hspace{0.5cm}
        \begin{subfigure}{0.45\textwidth}
        \includegraphics[width=0.9\linewidth]{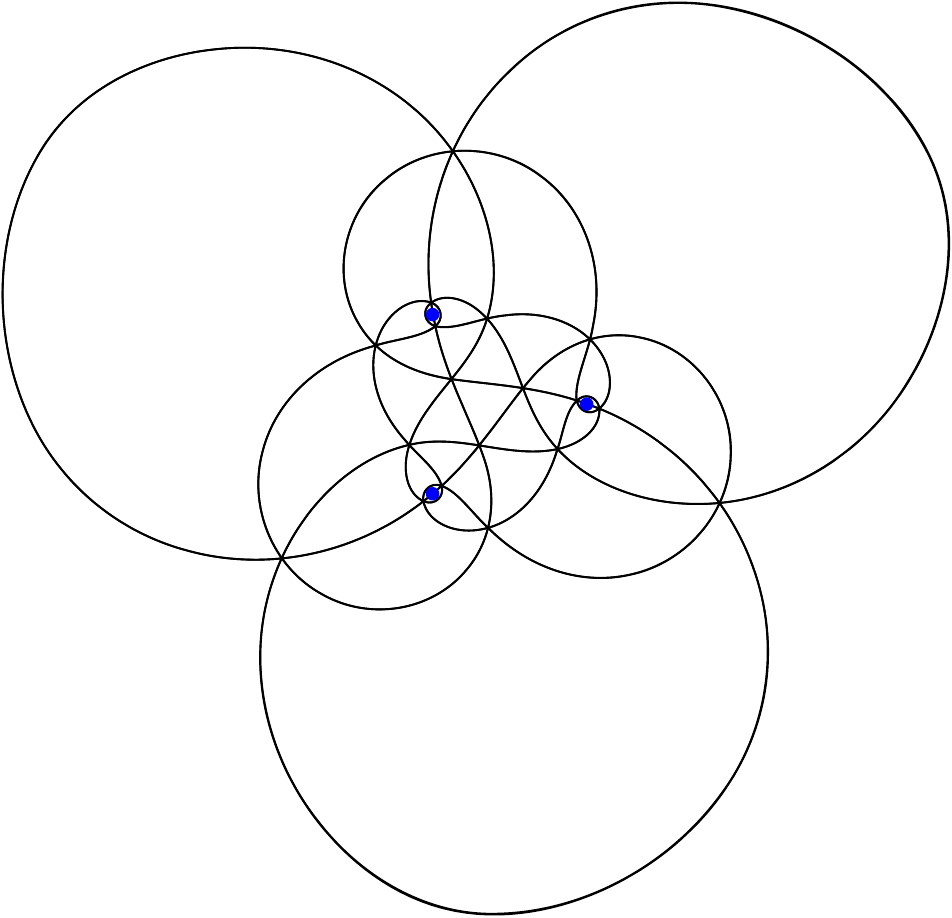}            \caption{$(\mathsf{p},\mathsf{q})=(2,5)$, i.e. $(p,q)=(7,-1)$.}\label{fig:25network}
        \end{subfigure}

        \caption{Networks plotted numerically by Hollands-Neitzke.}
\end{figure}

The reader is invited to count the triangles.

\bibliography{refs}
\bibliographystyle{utphys.bst}

\end{document}